\newcommand{\xqedhere}[2]{%
  \rlap{\hbox to#1{\hfil\llap{\ensuremath{#2}}}}}
\newcommand{\pushright}[1]{\ifmeasuring@#1\else\omit\hfill$\displaystyle#1$\fi\ignorespaces}
\newcommand{\pushleft}[1]{\ifmeasuring@#1\else\omit$\displaystyle#1$\hfill\fi\ignorespaces}
\theoremstyle{plain}
\newtheorem{theorem}{Theorem}[section]
\newtheorem{lemma}[theorem]{Lemma}
\newtheorem{proposition}[theorem]{Proposition}
\newtheorem{corollary}[theorem]{Corollary}
\theoremstyle{definition}
\numberwithin{equation}{section}
\renewcommand*{\backref}[1]{}
\renewcommand*{\backrefalt}[4]{%
   \ifcase #1 (Not cited.)%
   \or        (Cited on page~#2.)%
   \else      (Cited on pages~#2.)%
   \fi}
\begin{document}
\leading{14pt}
\author{Khadijeh Alibabaei}

\address{Centro de Matem\'{a}tica e Departamento de Matem\'{a}tica, Faculdade de Ci\^{e}ncias,
Universidade do Porto,
Porto, Portugal}
{\email{f.alibabaee@gmail.com}}
\subjclass[2000]{20E18, 20M05, 20M07, 20F10, 20K01} 	

\title{The  pseudovariety of all nilpotent groups is tame}

\begin{abstract}
It has been shown that for every prime number $p$, the pseudovariety $\mathsf{G}_p$ of all finite $p$-groups is tame with respect to an implicit signature containing the canonical implicit signature.
In this paper we generalize this result and we  show that the pseudovariety of all finite nilpotent groups is tame but it is not completely tame.
\end{abstract}

\maketitle
\smallskip
\noindent \textbf{Keywords.} {relatively free profinite semigroup, pseudovariety of semigroups,  system of equations, implicit signature, completely tame, completely reducible, rational constraint, $\sigma$-full, weakly reducible.}

\section{Introduction}
 By a \emph{pseudovariety} we mean a class of semigroups which is closed under taking subsemigroups, finite direct  products, and homomorphic images. A pseudovariety is said to be \emph{decidable} if there is an algorithm to test membership of a finite semigroup;   otherwise, the pseudovariety is said to be \emph{undecidable}.  Eilenberg \cite{Eilenberg:1974} established a correspondence between varieties of
rational languages and pseudovarieties of finite semigroups which translates problems in language theory into the decidability of pseudovarieties of semigroups.
In general the decidability of pseudovarieties is not preserved by many operations on pseudovarieties such as semidirect product, join and Mal'cev product (\cite{Rhodes:1997c, Albert&Baldinger&Rhodes:1992}).
 Almeida and Steinberg introduced a  refined version of decidability called  \emph{tameness} \cite{Jorge&Ben:2000}. The tameness property requires the \emph{reducibility } property which is  a generalization of the notion of inevitability that Ash introduced to prove the type II conjecture of Rhodes \cite{Ash:1991}.

There are various results using tameness of pseudovarieties to establish the decidability of pseudovarieties obtained by application of the operations of semidirect product, Mal'cev product and join \cite{Almeida&Costa&Teixeira:2010, Almeida&Azevedo&Zeitoun:1997, Almeida&Costa&Zeitoun:2004}.

Also there are connections between tameness and geometry and model theory \cite{Gitik&Rips:1995, Gitik:1999b, Herwig&Lascar:1997, Almeida&Delgado:1997,Almeida&Delgado:1999}. So, it is worth finding more examples of tame pseudovarieties.

It has been established that for every prime numbers $p$, the pseudovariety $\mathsf{G}_p$ of all finite $p$-groups is  tame \cite{Jorge:2002}.
Using this result, in this paper we show that the pseudovariety  $\mathsf{G}_{nil}$ of all finite nilpotent groups is tame with respect to an enlarged implicit signature $\sigma$. Since the free $\sigma$-subalgebra generated by a finite alphabet $A$   is not any more the free  group, in section 3, we  prove the word problem is decidable in this free $\sigma$-subalgebra, meaning that there is an algorithm to decide whether two elements of this $\sigma$-algebra represent the same element.

In the last section, we show that the pseudovariety  $\mathsf{G}_{nil}$ is $\sigma$-reducible if and only of for all prime number $p$, the pseudovariety
$\mathsf{G}_p$ is $\sigma$-reducible. This theorem yields as a corollary  that the pseudovariety $\mathsf{G}_{nil}$ is tame with respect to the systems of equations associated to finite directed graphs but  is not completely tame.

\section{Preliminaries}
A \emph{topological semigroup} is a semigroup $S$ endowed with a topology such that the basic semigroup
multiplication $S \times S \rightarrow S$ is continuous. We say that a topological semigroup $S$ is $A$-\emph{generated} if there is a mapping $\varphi : A \rightarrow S$ such that
$\varphi(A)$ generates a dense subsemigroup of $S$.

A \emph{pseudovariety of semigroups} is a class of finite semigroups closed under taking subsemigroups, homomorphic images, and direct products.
Given a pseudovariety  $\mathsf{V}$ of  semigroups, by a pro-$\mathsf{V}$ semigroup $S$ we mean a compact, zero-dimensional semigroup which is residually
in $\mathsf{V}$, that is for
every two distinct points $s, t \in S$, there exists a continuous homomorphism $\varphi : S \rightarrow T$ into some
member $T\in \mathsf{V}$ such that $\varphi(s) \neq \varphi(t)$.

  For  a finite set $A$ in the variety generated by $\mathsf{V}$,  we denote by $\overline{\Omega}_A\mathsf{V}$ the free pro-$\mathsf{V}$ semigroup. The free pro-$\mathsf{V}$ semigroup has the universal property in variety of pro-$\mathsf{V}$ semigroups in the sense   that  for every mapping $\varphi : A \rightarrow S$ into a pro-$\mathsf{V}$
semigroup $S$, there exists a unique continuous homomorphism $\hat{\varphi} : \overline{\Omega}_A\mathsf{V}\rightarrow S$ such that the following
diagram commutes:
$$
\xymatrix{
A
\ar[r]^(.4)\iota
\ar[d]_\varphi
&
\overline{\Omega}_A\mathsf{V}
\ar[ld]^{\hat\varphi}
\\
S
}
$$
For an $A$-generated  pro-$\mathsf{V}$ semigroup $S$, we view $S^A$ both as a direct power of $S$ and as the set of all functions from the set $A$ to $S$. To each element $w\in \overline{\Omega}_A\mathsf{V}$, we may associate an $A$-ary operation  $w_S:S^A\rightarrow S$: for every $\varphi\in S^A$, by the universal property of $\overline{\Omega}_A\mathsf{V}$, there is a unique extension $\hat\varphi:\overline{\Omega}_A\mathsf{V}\rightarrow S$.  Define $w_S(\varphi)=\hat\varphi(w)$. It is easy to see that
for every continuous
homomorphism $f : S \rightarrow T$ between pro-$\mathsf{V}$ semigroups, the
following diagram commutes:
$$
\xymatrix{
S^A
\ar[r]^{w_S}
\ar[d]_{f\circ-}
&
S
\ar[d]^{f}
\\
T^A
\ar[r]^{w_T}
&
T
}
$$
Operations with that property are called $A$-\emph{ary implicit operations}.
The element $w\in \overline{\Omega}_A\mathsf{V}$ is completely determined by the implicit operation $(w_S)_{S\in \mathsf{V}}$ \cite{Jorge:2002}. Note that, the elements of $A$ correspond to the component projects.

We say that an implicit operation $w\in \overline{\Omega}_A\mathsf{V}$ is \emph{computable} if there is an algorithm which given $S\in\mathsf{V}$ and $\varphi\in S^A$, output the value $w_S(\varphi)$.

An $A$-\emph{ary implicit operator} on a pro-$\mathsf{V}$ semigroup $S$ is a transformation $f:S^A\rightarrow S^A$ of $S^A$ to itself whose components $f_i:S^A\rightarrow S$ determined an $A$-ary implicit operation. The set of all $A$-ary implicit operators on a  pro-$\mathsf{V}$ semigroup $S$ is denoted by $\mathcal{O}_A(S)$. The set $\mathcal{O}_A(S)$ is a monoid under composition.
\begin{proposition}\cite[Proposition 2.2]{Jorge:2002}
 There is a natural topology on $\mathcal{O}_A(S)$ such that the correspondence
$$S\rightarrow \mathcal{O}_A(S)$$
defines a functor from the category
of pro-$\mathsf{V}$ semigroups with onto continuous homomorphisms as morphisms into the
category of profinite monoids.
\end{proposition}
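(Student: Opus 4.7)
The plan is to exhibit $\mathcal{O}_A(S)$ as an inverse limit of finite monoids, so that both the profinite topology and the functoriality fall out of that identification.

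Write $S=\varprojlim_{i\in I} S_i$ along onto continuous homomorphisms $p_i\colon S\twoheadrightarrow S_i$ with each $S_i$ a finite member of $\mathsf{V}$; consequently $S^A=\varprojlim S_i^A$. Each $\mathcal{O}_A(S_i)$ is finite because $S_i^A$ is. A connecting onto map $p_{ij}\colon S_j\twoheadrightarrow S_i$ should induce a monoid homomorphism $\mathcal{O}_A(p_{ij})\colon \mathcal{O}_A(S_j)\to \mathcal{O}_A(S_i)$ defined componentwise: if the $k$-th component of $\omega\in\mathcal{O}_A(S_j)$ equals $w_{S_j}$ for some $w\in\overline{\Omega}_A\mathsf{V}$, set the $k$-th component of $\mathcal{O}_A(p_{ij})(\omega)$ to be $w_{S_i}$. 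The first technical point is well-definedness: if $w_{S_j}=w'_{S_j}$ then I would derive $w_{S_i}=w'_{S_i}$ from the surjectivity of $p_{ij}^A\colon S_j^A\to S_i^A$ (available since $p_{ij}$ is onto and $A$ is finite) combined with the naturality square of the excerpt applied to $p_{ij}$. This yields an inverse system of finite monoids.

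Next I would prove the canonical map $\mathcal{O}_A(S)\to\varprojlim \mathcal{O}_A(S_i)$ is a bijection. Injectivity is clear because two implicit operators on $S$ are equal iff they agree after projection to every finite quotient. Surjectivity is the heart of the argument: a compatible family $(\omega^i)_i$ must be shown to assemble into an operator on $S$ that is natural for \emph{every} continuous homomorphism out of $S$, not only the projections $p_i$. I would obtain this by lifting each component of the family to an element of $\overline{\Omega}_A\mathsf{V}$ via the universal property of the free pro-$\mathsf{V}$ semigroup together with compactness, then evaluating that element on $S$. Transporting the profinite topology and monoid structure across this bijection makes $\mathcal{O}_A(S)$ a profinite monoid.

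The functor on morphisms is then immediate: for an onto continuous homomorphism $f\colon S\twoheadrightarrow T$, one obtains an induced map on each finite level by the same recipe as for connecting maps, and these assemble to a continuous monoid homomorphism $\mathcal{O}_A(f)\colon \mathcal{O}_A(S)\to \mathcal{O}_A(T)$; preservation of identities and composition is level-wise. The main obstacle I anticipate is exactly the surjectivity step above: encoding a compatible family of finite-level operators as an implicit operator on $S$ naturalised against \emph{all} pro-$\mathsf{V}$ semigroups, not merely those appearing in the projective system defining $S$. I would handle it by upgrading the finite-level data to genuine elements of $\overline{\Omega}_A\mathsf{V}$ through its universal property, whose evaluations then automatically produce implicit operations naturalising over every target.
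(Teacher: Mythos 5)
Your proposal is correct and follows essentially the same route as the proof in the cited source (the paper itself only quotes \cite[Proposition 2.2]{Jorge:2002}): write $S=\varprojlim S_i$ over its finite continuous quotients, note each $\mathcal{O}_A(S_i)$ is finite, induce the connecting maps from the onto homomorphisms via naturality, and identify $\mathcal{O}_A(S)\cong\varprojlim\mathcal{O}_A(S_i)$, with functoriality checked levelwise. The step you flag as the heart is handled exactly as you indicate there: since every component of a finite-level operator is realized by some element of $\overline{\Omega}_A\mathsf{V}$, the realizing sets form a directed family of nonempty closed subsets (nested by ontoness plus naturality), and compactness of $\overline{\Omega}_A\mathsf{V}$ yields one pseudoword per component inducing the whole compatible family, whose evaluation on $S$ is automatically natural against all continuous homomorphisms.
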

  For $n$-ary implicit operations  $w_1,\ldots,w_n\in \overline{\Omega}_n\mathsf{V}$ and a pro-$\mathsf{V}$ semigroup $S$, denote by $( w_1,\ldots,w_n)$ the implicit operator
  \begin{equation*}
\begin{aligned}
S^n&\rightarrow S^n\\
(s_1,\ldots,s_n)&\mapsto((w_1)_S(s_1,\ldots,s_n),\ldots,(w_n)_S(s_1,\ldots,s_n)).
\end{aligned}
\end{equation*}
Denote composition of operators by concatenation: $ ( v_1,\ldots,v_n)( w_1,\ldots,w_n)$ has component $i$ determined by the operation $v_i( w_1,\ldots,w_n)$.

Recall that, for an element $v$ of a finite
semigroup $V$, $v^\omega$ denotes the unique idempotent power of $v$. This defines a unary
implicit operation $x \mapsto x^\omega$ on finite semigroups (and similarly on finite monoids)
which therefore has a natural interpretation on each profinite monoid of the form
$\mathcal{O}_n(S)$. 
Note that $x^\omega$ is the limit of the sequence $\{x^{n!}\}_n$.
We denote by $a_j\circ(w_1,\ldots,w_n)^\omega$, the $j$-th component of the $\omega$-power of the operator $(w_1,\ldots,w_n)$.
\begin{lemma}\cite[Corollary 2.5]{Jorge:2002}
Let $w_1,\ldots, w_n\in \overline{\Omega}_n\mathsf{V}$.
Then each component of $( w_1,\ldots,w_n)^\omega$ is also
a member of $\overline{\Omega}_n\mathsf{V}$. Moreover, if $w_i$ are computable operations, then so is each $a_j\circ( w_1,\ldots,w_n)^\omega$.
\end{lemma}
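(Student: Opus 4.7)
The plan is to exploit the preceding proposition, which makes $\mathcal{O}_n(\overline{\Omega}_n\mathsf{V})$ a profinite monoid: I will form $W^{\omega}$, where $W=(w_1,\ldots,w_n)$, inside this profinite monoid, read off its components, and then verify via functoriality that they have the expected image on each finite $S\in\mathsf{V}$.

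First I would note that, because every implicit operation is uniquely determined by its value on the canonical generators $x_1,\ldots,x_n$ of $\overline{\Omega}_n\mathsf{V}$, there is a canonical bijection $\mathcal{O}_n(\overline{\Omega}_n\mathsf{V})\longleftrightarrow(\overline{\Omega}_n\mathsf{V})^n$ sending an operator $f$ to $(f_1(x_1,\ldots,x_n),\ldots,f_n(x_1,\ldots,x_n))$. Viewing $W$ as an element of the profinite monoid $\mathcal{O}_n(\overline{\Omega}_n\mathsf{V})$, the idempotent power $W^\omega=\lim_k W^{k!}$ exists, and under the bijection each component $a_j\circ W^\omega$ is already an element of $\overline{\Omega}_n\mathsf{V}$. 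This gives the first assertion.

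To see that the implicit operation associated to $a_j\circ W^\omega$ agrees with the $\omega$-power of the corresponding operator on every $S\in\mathsf{V}$, I would apply the functoriality of the previous proposition. For any continuous onto homomorphism $\hat\varphi\colon\overline{\Omega}_n\mathsf{V}\to S$, the induced continuous monoid morphism $\mathcal{O}_n(\overline{\Omega}_n\mathsf{V})\to\mathcal{O}_n(S)$ sends $W$ to the operator $W_S$ built from $(w_i)_S$. Continuous morphisms between profinite monoids preserve $\omega$-powers (they are the limits of the sequence $x^{k!}$), so $W^\omega$ maps to $(W_S)^\omega$; evaluating the $j$-th coordinate at the image of the generators then yields $(a_j\circ W^\omega)_S=a_j\circ(W_S)^\omega$ on $S^n$.

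For the computability statement, assume each $w_i$ is computable and fix a finite $S\in\mathsf{V}$ and $\psi\in S^n$. Using computability of the $w_i$, one evaluates $W_S\colon S^n\to S^n$ pointwise. Iterating $W_S$ from $\psi$ produces an eventually periodic sequence in the finite set $S^n$; standard cycle detection locates the smallest $N$ with $W_S^N(\psi)=W_S^{2N}(\psi)$, and one verifies that this equals $(W_S)^\omega(\psi)$ (because $k!$ is eventually both $\geq$ the pre-period and a multiple of the period of the orbit, so $W_S^{k!}(\psi)$ stabilizes at $W_S^N(\psi)$). The $j$-th coordinate of $W_S^N(\psi)$ is the required value of $a_j\circ W^\omega$. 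The one delicate ingredient in the overall argument is the compatibility of the functor $\mathcal{O}_n$ with $\omega$-powers used in the second paragraph, but this reduces to the standard fact that continuous morphisms between profinite monoids commute with idempotent powers.
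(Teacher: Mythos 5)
Your argument is correct, and it is essentially the proof behind the citation: the paper itself only quotes this as \cite[Corollary 2.5]{Jorge:2002}, whose argument is exactly yours --- use the profinite monoid structure of $\mathcal{O}_n(\overline{\Omega}_n\mathsf{V})$ from the preceding proposition to form $(w_1,\ldots,w_n)^\omega$ there, identify its components with elements of $\overline{\Omega}_n\mathsf{V}$ via evaluation at the generators, transfer to each $S$ by functoriality (continuous homomorphisms preserve $\omega$-powers), and compute the value over a finite $S$ as the idempotent power of the induced transformation of the finite set $S^n$. Only a cosmetic point: for a non-onto evaluation $\psi\in S^n$ one should pass to the closed subsemigroup generated by $\psi(A)$ before applying the functor, which your phrase about evaluating at the image of the generators implicitly does.
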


Let $\mathsf{S}$ be the pseudovariety of all finite semigroups. The elements of $(\overline{\Omega}_A\mathsf{S})^1$,
over arbitrary finite alphabets $A$, are called \emph{pseudowords}.
A \emph{pseudoidentity} is a formal equality of the form $u = v$ with $u, v \in \overline{\Omega}_A\mathsf{S}$ for
some finite alphabet $A$.
For a pseudovariety $\mathsf{V}$ of semigroups, we denote by $\psi_\mathsf{V}$ the unique continuous homomorphism
$\overline{\Omega}_A\mathsf{S}\rightarrow \overline{\Omega}_A\mathsf{V}$ which restricts to the identity on $A$.
We say that $\mathsf{V}$ satisfies the  pseudoidentity $u = v$ with $u, v \in \overline{\Omega}_A\mathsf{S}$ if $\psi_\mathsf{V}(u)=\psi_\mathsf{V}(v)$.

 By an \emph{implicit signature} we mean a set of pseudowords including
multiplication.
An important example is given by the canonical signature $\kappa$ consisting of the multiplication
and the unary operation $x \mapsto x^{\omega-1}$ which, to an element $s$ of a finite semigroup with $n$ elements,
associates the inverse of $s^{1+n!}$ in the cyclic subgroup generated by this power.

 Let $\sigma$ be an implicit signature. Under the natural interpretation of the
elements of $\sigma$, every profinite semigroup may be viewed as a $\sigma$-algebra in the
sense of universal algebra. The $\sigma$-subalgebra of
$\overline{\Omega}_{A}\mathsf{V}$ generated by $A$ is
denoted by ${\Omega}_{A}^{\sigma}\mathsf{V}$ and it is freely generated by $A$. We say that ${\Omega}_{A}^{\sigma}\mathsf{V}$ has  \emph{decidable word problem} if there is an algorithm to decide whether two pseudowords $u, v\in\overline{{\Omega}}_{A}\mathsf{S}$ with $\psi_{\mathsf{V}}(u),\psi_{\mathsf{V}}(v)\in {\Omega}_{A}^{\sigma}\mathsf{V}$   represent the same implicit  operation in   ${\Omega}_{A}^{\sigma}\mathsf{V}$.

For every  subset $L$of $\overline{\Omega}_A\mathsf{S}$ and $\mathsf{V}$ be a pseudovariety, we denote by $Cl(L)$, $Cl_{\mathsf{V}}(L)$, and $Cl_{\sigma,\mathsf{V}}(L)$, the closure of $L$ in   $\overline{\Omega}_{A}\mathsf{S}$, $\overline{\Omega}_{A}\mathsf{V}$,  and ${\Omega}^{\sigma}_{A}\mathsf{V}$, respectively.

As it is mentioned in the introduction, the  property tameness requires the property reducibility. To define  reducibility we need a system of equations.
Let $X$  and $P$ be disjoint  finite sets,
whose elements will be the \emph{variables} and the \emph{parameters}  of the system, respectively.  Consider the following  system of equations:
\begin{equation}\label{eq32}
\begin{aligned}
u_i=v_i&&&&&&&& (i=1\ldots,m),
\end{aligned}
\end{equation}
where $u_i$ and $v_i$ are pseudowords  of $\overline{\Omega}_{X\cup P}\mathsf{S}$. We also fix a finite set $A$ and for every
$x \in X$, we choose a  rational subset $L_x\subseteq A^{*}$. For every parameter
$p \in P$, we associate an element $w_p\in\overline{\Omega}_{A}\mathsf{S}$. A \emph{solution} of the system \eqref{eq32} modulo $\mathsf{V}$ satisfying the constraints is  a function $\delta:{X\cup P}\rightarrow \overline{\Omega}_{A}\mathsf{S}$ satisfying  the following conditions:
\begin{enumerate}
  \item $\delta(x)\in Cl(L_x)$.
 \item $\delta(p)= w_p$.
  \item  $\mathsf{V}$ satisfies  the pseudoidentities  $\delta(u_i)=\delta(v_i)$ ($i=1\ldots,m$).
\end{enumerate}

\begin{theorem}\label{theorem11}\cite[Theorem 5.6.1]{Jorge:1994}
 Let $\mathsf{V}$ be a pseudovariety of finite semigroups.  The following conditions are equivalent for a finite system $\Sigma$ of equations with
rational constraints over the finite alphabet $A$:
\begin{itemize}
  \item $\Sigma$ has a solution modulo every $A$-generated semigroup in $\mathsf{V}$
  \item $\Sigma$ has a solution modulo $\overline{\Omega}_A\mathsf{V}$.
\end{itemize}
\end{theorem}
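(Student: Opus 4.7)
The plan is to prove the easy implication by direct projection and the harder implication by a compactness argument in the compact space $\overline{\Omega}_A\mathsf{S}$.

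For the direction assuming a solution modulo $\overline{\Omega}_A\mathsf{V}$: let $\delta : X \cup P \to \overline{\Omega}_A\mathsf{S}$ be such a solution, and let $S$ be any $A$-generated semigroup in $\mathsf{V}$. By the universal property, the generating map $A \to S$ extends to a continuous surjection $\pi_S : \overline{\Omega}_A\mathsf{V} \to S$. Applying $\pi_S \circ \psi_{\mathsf{V}}$ to the pseudoidentities $\delta(u_i) = \delta(v_i)$ yields equalities in $S$, while the constraint conditions (1) and (2) on $\delta$ do not depend on $S$. Hence the same $\delta$ is a solution modulo $S$.

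For the converse, I would work inside the compact space
\[
K = \prod_{x \in X} Cl(L_x) \times \prod_{p \in P} \{w_p\},
\]
regarded as a closed subset of $(\overline{\Omega}_A\mathsf{S})^{X \cup P}$ (each $Cl(L_x)$ is closed in the compact semigroup $\overline{\Omega}_A\mathsf{S}$). For every finite $A$-generated $S \in \mathsf{V}$ define
\[
K_S = \{\delta \in K : S \text{ satisfies } \delta(u_i) = \delta(v_i) \text{ for all } i \}.
\]
Each $K_S$ is closed in $K$, being the preimage of a finite intersection of diagonals in $S^{2m}$ under the continuous evaluation $K \to S^{X\cup P}$, and it is non-empty by hypothesis.

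The crucial step is showing that $\{K_S\}$ has the finite intersection property. Given $A$-generated $S_1,\dots,S_n \in \mathsf{V}$, take the $A$-generated subsemigroup $T$ of $S_1 \times \cdots \times S_n$ with $A$ mapped diagonally to the chosen generators; then $T \in \mathsf{V}$ because pseudovarieties are closed under direct product and subsemigroups, and any element of $K_T$ projects to a solution modulo each $S_j$, so $K_T \subseteq K_{S_1} \cap \cdots \cap K_{S_n}$. Compactness of $K$ then gives a $\delta \in \bigcap_S K_S$. Since $\overline{\Omega}_A\mathsf{V}$ is the projective limit of its $A$-generated finite continuous images, the conjunction of all the finite pseudoidentities $\delta(u_i) = \delta(v_i)$ in such $S$ is equivalent to their holding in $\overline{\Omega}_A\mathsf{V}$, producing the desired solution. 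The main thing to watch is the bookkeeping in this last step — namely, that working throughout in $\overline{\Omega}_A\mathsf{S}$ (rather than replacing solutions by their images in each $S$) is what lets the intersection of closed sets be non-empty and lift to a single pseudoword-valued assignment.
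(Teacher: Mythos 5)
The paper offers no proof of this statement—it is imported verbatim from Almeida's book (Theorem 5.6.1 of \cite{Jorge:1994})—and your compactness argument is exactly the standard proof of that result, so there is nothing genuinely different to compare. Your write-up is correct as it stands: the easy direction by factoring the canonical projection through $\psi_{\mathsf{V}}$, the closedness of each $K_S$ (using that evaluating $u_i,v_i$ after composing $\delta$ with the continuous projection onto the finite discrete $S$ is continuous), the finite intersection property via the diagonally $A$-generated subsemigroup of $S_1\times\cdots\times S_n$, and the final passage to $\overline{\Omega}_A\mathsf{V}$ by separating points with continuous homomorphisms onto $A$-generated members of $\mathsf{V}$ all go through.
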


Let $\sigma$ be an implicit signature.
Consider a system of the form \eqref{eq32}, with   constraints $L_x\subseteq A^*$  and $w_p\in {\Omega}^{\sigma}_{A}\mathsf{S}$ ($x\in X$ and $p\in P$)   where  $u_i$ and $v_i$ are $\sigma$-terms in ${\Omega}^{\sigma}_{X\cup P}\mathsf{S}$. Assume that this system has a solution modulo $\mathsf{V}$. A pseudovariety $\mathsf{V}$  is said to be \emph{$\sigma$-reducible}  for this system if it has a solution $\delta:{X\cup P}\rightarrow {\Omega}_{A}^\sigma \mathsf{S}$ modulo $\mathsf{V}$. We say that $\mathsf{V}$ is \emph{completely
$\sigma$-reducible} if it is $\sigma$-reducible for every such system.

\begin{proposition}\cite[Proposition 3.1]{Jorge&Zeitoun&Carlos:2007}
  Let $\mathsf{V}$ be a pseudovariety. If $\mathsf{V}$ is $\sigma$-reducible
respect to the  systems of equations of $\sigma$-terms without parameters, then $\mathsf{V}$ is completely
$\sigma$-reducible.
\end{proposition}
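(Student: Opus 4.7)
My plan is to reduce an arbitrary system with parameters to a parameter-free system by absorbing each parameter value into a fresh variable that is forced, via rational constraints and a few auxiliary $\sigma$-term equations, to take the prescribed value.

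Concretely, given a $\sigma$-term system $\Sigma$ of equations $u_i=v_i$ over variables $X$ and parameters $P$, with rational constraints $L_x\subseteq A^*$ and parameter values $w_p\in {\Omega}^{\sigma}_{A}\mathsf{S}$ that admits a solution modulo $\mathsf{V}$, I would first fix, for each $p\in P$, a $\sigma$-term representation $w_p=t_p(a_1^{(p)},\ldots,a_{k_p}^{(p)})$ with letters $a_j^{(p)}\in A$. Introducing fresh variables $Y=\{y_p:p\in P\}$ and $Z=\{z_a:a\in A\}$, I would form a parameter-free system $\Sigma'$ over $X\cup Y\cup Z$ consisting of the substituted equations $u_i'=v_i'$ (obtained from $u_i,v_i$ by replacing each $p$ by $y_p$) together with the coupling equations $y_p=t_p(z_{a_1^{(p)}},\ldots,z_{a_{k_p}^{(p)}})$, endowed with the rational constraints $L_x$ for $x\in X$, $L_{y_p}=A^+$, and $L_{z_a}=\{a\}$.

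The key steps are then: (i) lift any given solution $\delta$ of $\Sigma$ to a solution of $\Sigma'$ modulo $\mathsf{V}$ via $\delta(y_p):=w_p$ and $\delta(z_a):=a$, so that $\Sigma'$ is known to be solvable modulo $\mathsf{V}$; (ii) apply the hypothesis of $\sigma$-reducibility for parameter-free systems to obtain a $\sigma$-solution $\delta'\colon X\cup Y\cup Z\to {\Omega}^{\sigma}_{A}\mathsf{S}$ of $\Sigma'$; (iii) extract a $\sigma$-solution of $\Sigma$ by setting $\delta(x)=\delta'(x)$ for $x\in X$ and $\delta(p)=w_p$ for $p\in P$, and verify it satisfies the original system modulo $\mathsf{V}$.

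The main obstacle will be the verification in step (iii). The singleton constraints $L_{z_a}=\{a\}$ pin $\delta'(z_a)$ to $a$ exactly in $\overline{\Omega}_A\mathsf{S}$, and the coupling equations then yield $\delta'(y_p)\equiv w_p\pmod{\mathsf{V}}$ — but only as a pseudoidentity modulo $\mathsf{V}$, not as an equality inside ${\Omega}^{\sigma}_{A}\mathsf{S}$. The conceptual point to check is that this weaker congruence still suffices to transfer $\delta'(u_i')=\delta'(v_i')$ modulo $\mathsf{V}$ to $\delta(u_i)=\delta(v_i)$ modulo $\mathsf{V}$; this holds because the evaluation of any $\sigma$-term in a member of $\mathsf{V}$ depends only on the $\mathsf{V}$-classes of its arguments, so the chain $\delta(u_i)\equiv\delta'(u_i')\equiv\delta'(v_i')\equiv\delta(v_i)\pmod{\mathsf{V}}$ closes and $\delta$ is the desired $\sigma$-solution.
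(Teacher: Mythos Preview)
The paper does not supply a proof of this proposition: it is quoted verbatim from \cite[Proposition~3.1]{Jorge&Zeitoun&Carlos:2007} and used as a black box, so there is no in-paper argument to compare against.

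That said, your proposal is essentially the standard reduction and is correct. The only points worth tightening are cosmetic. First, the intermediate variables $y_p$ are not strictly needed: you may substitute the $\sigma$-term $t_p(z_{a_1^{(p)}},\ldots,z_{a_{k_p}^{(p)}})$ directly for the parameter $p$ inside each $u_i,v_i$ and work over $X\cup Z$ alone; your version with the coupling equations is equally valid but slightly heavier. Second, your justification of step~(iii) is the right one: since the singleton constraints force $\delta'(z_a)=a$ in $\overline{\Omega}_A\mathsf{S}$ exactly (the closure of a singleton is itself), the coupling equations give $\psi_{\mathsf{V}}(\delta'(y_p))=\psi_{\mathsf{V}}(w_p)$, and because each $u_i,v_i$ is a $\sigma$-term, the induced evaluation $\overline{\Omega}_A\mathsf{S}\to\overline{\Omega}_A\mathsf{S}$ commutes with $\psi_{\mathsf{V}}$, whence $\psi_{\mathsf{V}}(\delta(u_i))=\psi_{\mathsf{V}}(\delta'(u_i'))=\psi_{\mathsf{V}}(\delta'(v_i'))=\psi_{\mathsf{V}}(\delta(v_i))$ as you indicate. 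No gap here.
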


We say that a pseudovariety $\mathsf{V}$ is \emph{$\sigma$-tame} with respect to a class $\mathfrak{C}$ of systems of equations if
the following conditions hold:
\begin{itemize}
 \item  for every system of equations in $\mathfrak{C}$, the pseudovariety $\mathsf{V}$ is $\sigma$-reducible;
 \item  the word problem is decidable in $\Omega_A^{\sigma}\mathsf{V}$;
\item $\mathsf{V}$ is recursively enumerable, in the sense  that there is some Turing machine which outputs
successively  representatives of all the isomorphism classes of members of $V$.
\end{itemize}
We say that $\mathsf{V}$ is \emph{completely $\sigma$-tame} if $V$ is completely $\sigma$-reducible. Some  important tameness results are as follows:
\begin{itemize}
  \item The pseudovariety $\mathsf{G}$ of all finite groups is $\kappa$-tame with respect to systems of equations associated with finite
directed graphs \cite{Jorge&Ben:2000,Ash:1991}.  It follows from results of Coulbois and Kh\'{e}lif
 that $\mathsf{G}$ is not completely $\kappa$-tame \cite{Coulbois&&Khelif}.
 \item For a prime number $p$, the pseudovariety $\mathsf{G}_p$ of all finite $p$-groups is tame  with respect to the systems of equations associated with finite directed graphs, but not $\kappa$-tame \cite{Jorge:2002}.
  \item The pseudovariety $\mathsf{Ab}$ of all finite abelian groups is completely $\kappa$-tame \cite{Jorge&&Delgado:2005}.
   \end{itemize}

Consider  a system of equations \eqref{eq32} with   constraints $L_x\subseteq A^*$  and $w_p\in {\Omega}^{\sigma}_{A}\mathsf{S}$ ($x\in X$ and $p\in P$)   where  $u_i$ and $v_i$ are $\sigma$-terms in ${\Omega}^{\sigma}_{X\cup P}\mathsf{S}$.  A pseudovariety $\mathsf{V}$ is said to be  \emph{weakly $\sigma$-reducible} with respect to this system if in the case it has a solution modulo $\mathsf{V}$, then there is a solution $\delta:{X\cup P}\rightarrow \overline{\Omega}_{A} \mathsf{S}$ modulo $\mathsf{V}$ which satisfies the conditions $\psi_{\mathsf{V}}(\delta(x))\in {\Omega}_{A}^\sigma \mathsf{V}$ ($x\in X$). It is obvious that if a pseudovariety $\mathsf{V}$ is $\sigma$-reducible, then it is weakly $\sigma$-reducible but the converse is not true. For a prime number $p$, the pseudovariety $\mathsf{G}_p$ of all finite $p$ groups is weakly $\kappa$-reducible but it is not $\kappa$-reducible \cite{Steinberg:1998a, Jorge&Ben:2000}.

We say that a pseudovariety $\mathsf{V}$ is $\sigma$-\emph{full} if for every rational language
$L \subseteq A^*$, the set $\psi_\mathsf{V}(Cl_{\sigma}(L))$ is closed in ${\Omega}_{A}^{\sigma}\mathsf{V}$.

\begin{proposition}\label{proposition3}\cite[Proposition 4.5]{Jorge&Ben:2000}
Every $\sigma$-full weakly  $\sigma$-reducible pseudovariety  is $\sigma$-reducible.
\end{proposition}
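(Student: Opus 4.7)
My plan is to take a solution of the system produced by weak $\sigma$-reducibility and, for each variable $x$, swap the value $\delta(x)\in\overline{\Omega}_A\mathsf{S}$ for a $\sigma$-term that induces the same element of $\Omega^\sigma_A\mathsf{V}$, while staying inside the closure of the rational constraint. Once the replacement $\delta'$ agrees with $\delta$ after composition with $\psi_\mathsf{V}$, the pseudoidentities $\delta(u_i)=\delta(v_i)$ are automatically inherited by $\delta'(u_i)=\delta'(v_i)$ via uniqueness of $\sigma$-homomorphic extensions, so only the constraint (1) needs real work.

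Concretely, suppose the system admits a solution modulo $\mathsf{V}$. By weak $\sigma$-reducibility, fix $\delta\colon X\cup P\to\overline{\Omega}_A\mathsf{S}$ satisfying $\delta(x)\in Cl(L_x)$, $\delta(p)=w_p$, $\mathsf{V}\models\delta(u_i)=\delta(v_i)$, and additionally $\alpha_x:=\psi_\mathsf{V}(\delta(x))\in\Omega^\sigma_A\mathsf{V}$ for every $x\in X$. The key step is to produce, for each such $x$, an element $\beta_x\in Cl_\sigma(L_x)\subseteq\Omega^\sigma_A\mathsf{S}$ with $\psi_\mathsf{V}(\beta_x)=\alpha_x$.

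To find $\beta_x$, I would chase closures. Since $\psi_\mathsf{V}$ is continuous and $\overline{\Omega}_A\mathsf{S}$ is compact Hausdorff, $\psi_\mathsf{V}$ is a closed map, so $\psi_\mathsf{V}(Cl(L_x))=Cl_\mathsf{V}(L_x)$ and in particular $\alpha_x\in Cl_\mathsf{V}(L_x)$. Combined with $\alpha_x\in\Omega^\sigma_A\mathsf{V}$ and the fact that $\Omega^\sigma_A\mathsf{V}$ carries the subspace topology from $\overline{\Omega}_A\mathsf{V}$, this places $\alpha_x$ inside the closure of $L_x$ computed in $\Omega^\sigma_A\mathsf{V}$. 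This is where $\sigma$-fullness enters: by hypothesis, $\psi_\mathsf{V}(Cl_\sigma(L_x))$ is a closed subset of $\Omega^\sigma_A\mathsf{V}$ containing $L_x$, so it contains the above closure, and hence contains $\alpha_x$. Any preimage of $\alpha_x$ in $Cl_\sigma(L_x)$ is the desired $\beta_x$. I expect this to be the main obstacle of the argument, as it is the unique place where $\sigma$-fullness is actually invoked and it requires keeping track of three distinct closures (in $\overline{\Omega}_A\mathsf{S}$, in $\overline{\Omega}_A\mathsf{V}$, and in $\Omega^\sigma_A\mathsf{V}$).

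Finally, set $\delta'(x)=\beta_x$ for $x\in X$ and $\delta'(p)=w_p$ for $p\in P$. By construction $\delta'(x)\in\Omega^\sigma_A\mathsf{S}$, $\delta'(x)\in Cl_\sigma(L_x)\subseteq Cl(L_x)$, and $\delta'(p)=w_p$, covering the constraint and parameter conditions. Since $\psi_\mathsf{V}\circ\delta'$ and $\psi_\mathsf{V}\circ\delta$ agree on $X\cup P$, their unique $\sigma$-homomorphic extensions to $\Omega^\sigma_{X\cup P}\mathsf{S}$ coincide, so
\[
\psi_\mathsf{V}(\delta'(u_i))=\psi_\mathsf{V}(\delta(u_i))=\psi_\mathsf{V}(\delta(v_i))=\psi_\mathsf{V}(\delta'(v_i))
\]
for every $i$, establishing the pseudoidentity condition and completing the $\sigma$-reduction.
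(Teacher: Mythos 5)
Your argument is correct: it is exactly the standard replacement argument — take the weak solution $\delta$, use $\sigma$-fullness to find, for each variable $x$, an element $\beta_x\in Cl(L_x)\cap\Omega^\sigma_A\mathsf{S}$ with $\psi_\mathsf{V}(\beta_x)=\psi_\mathsf{V}(\delta(x))$, and note that the pseudoidentities only depend on $\psi_\mathsf{V}\circ\delta$ restricted to $X\cup P$. The paper itself gives no proof of this proposition (it is quoted from Almeida and Steinberg, Proposition 4.5), and your proof coincides with the argument given there, so there is nothing to add.
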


Let $x\in \overline{\Omega}_A\mathsf{S}$ and fix $n\in \mathbb{N}$. We denote by $x^{n^\omega}$, the pseudoword  $$\lim_{k\to \infty} x^{n^{k!}}.$$

Let $w_1,\ldots,w_k$ be group words. We denote by $M(w_1,\ldots,w_k)$ the $k\times |A|$ matrix whose $(i,j)$-entry is  the sum of
the exponents in the occurrences of the letter $a_j\in A$ in the word $w_i$.
\begin{theorem}\cite{Jorge:2002}
  Let $p$ be a prime number and  $\sigma_p$ be the set of all  implicit signature  obtained by adding to the
canonical signature $\kappa$ all implicit operations of the form
$$ a_j\circ(w_1,\ldots, w_n)^\omega$$
with $j = 1,\ldots, n$,  $w_i$ are $\kappa$-terms such that the subgroup $\left<w_1,\ldots,w_n\right>$ is $\mathsf{G}_p$-dense. Then  $\mathsf{G}_{p}$ is $\sigma_p$-tame.
\end{theorem}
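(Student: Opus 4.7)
The plan is to verify separately the three defining conditions of $\sigma_p$-tameness: (i) $\sigma_p$-reducibility with respect to the systems of equations associated with finite directed graphs, (ii) decidability of the word problem in $\Omega_A^{\sigma_p}\mathsf{G}_p$, and (iii) recursive enumerability of $\mathsf{G}_p$. Condition (iii) is immediate since one can effectively enumerate representatives of the finite $p$-groups.

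For condition (ii) I would argue by structural induction on $\sigma_p$-terms. Every newly adjoined operation $a_j \circ (w_1,\ldots,w_n)^\omega$ is computable on finite $p$-groups by the cited corollary on $\omega$-powers of implicit operators applied to the $\kappa$-terms $w_i$, so evaluation of any $\sigma_p$-term on a given finite $p$-group is effective. Equality of two $\sigma_p$-terms in $\Omega_A^{\sigma_p}\mathsf{G}_p$ can then be tested by evaluating on sufficiently large finite $p$-groups, using residual finiteness of $\Omega_A^{\sigma_p}\mathsf{G}_p$ inherited from $\overline{\Omega}_A\mathsf{G}_p$ together with an effective bound on the separating group obtained from the complexity of the terms.

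For condition (i), the strategy is to invoke Proposition~\ref{proposition3}: it suffices to prove that $\mathsf{G}_p$ is both weakly $\sigma_p$-reducible and $\sigma_p$-full for the class of directed graph systems. Weak $\sigma_p$-reducibility is essentially free: $\mathsf{G}_p$ is already known to be weakly $\kappa$-reducible for directed graph systems (as mentioned in the literature cited above), and since $\kappa\subseteq\sigma_p$, any solution witnessing weak $\kappa$-reducibility in $\overline{\Omega}_A\mathsf{S}$ whose $\mathsf{G}_p$-image lies in $\Omega_A^\kappa\mathsf{G}_p$ also lies in $\Omega_A^{\sigma_p}\mathsf{G}_p$.

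The main obstacle is $\sigma_p$-fullness: one must show that $\psi_{\mathsf{G}_p}(Cl_{\sigma_p}(L))$ is closed in $\Omega_A^{\sigma_p}\mathsf{G}_p$ for every rational $L\subseteq A^*$. The failure of plain $\kappa$-reducibility for $\mathsf{G}_p$ stems precisely from the failure of $\kappa$-fullness, and the enlargement of $\kappa$ to $\sigma_p$ by the operators $a_j\circ(w_1,\ldots,w_n)^\omega$ attached to $\mathsf{G}_p$-dense tuples is tailor-made to supply the missing limit points. Concretely, given a convergent net in $\psi_{\mathsf{G}_p}(Cl_{\sigma_p}(L))$, one would extract a cofinal subnet whose $\sigma_p$-terms share a common outer $(w_1,\ldots,w_n)^\omega$-pattern obtained from the finite-automaton structure of $L$; the $\mathsf{G}_p$-density of $\langle w_1,\ldots,w_n\rangle$ forces the iterates of the operator $(w_1,\ldots,w_n)$ to stabilise under the $\omega$-power, so the limit is itself expressible as one of the adjoined $\sigma_p$-operations applied to $\kappa$-terms coming from $L$, and thus lies in $\psi_{\mathsf{G}_p}(Cl_{\sigma_p}(L))$. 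Establishing this extraction and the compatibility between the automaton-level combinatorics of $L$ and the profinite convergence in $\Omega_A^{\sigma_p}\mathsf{G}_p$ is the technical heart of the argument.
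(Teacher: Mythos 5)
Your skeleton (verify recursive enumerability, the word problem, and reducibility, the latter via Proposition~\ref{proposition3} as ``weakly $\sigma_p$-reducible $+$ $\sigma_p$-full'') is indeed the route taken in \cite{Jorge:2002}, and the weak reducibility and enumerability steps are fine. But the two substantive conditions are not actually proved in your proposal. For the word problem, the claimed ``effective bound on the separating group obtained from the complexity of the terms'' is precisely what has to be established and is nowhere justified: computability of the adjoined operations only yields a semi-decision procedure for \emph{refuting} equality, since equality in $\Omega_A^{\sigma_p}\mathsf{G}_p$ quantifies over all finite $p$-groups. The argument that works is algebraic and uses the density hypothesis, which your sketch never invokes: $\mathsf{G}_p$-density of $\left<w_1,\ldots,w_n\right>$ (equivalently $\det M(w_1,\ldots,w_n)\nequiv 0 \pmod p$, by \cite{Margolis&Sapir&Weil:2001}) makes the operator $(w_1,\ldots,w_n)$ invertible over free pro-$p$ groups, so the pseudoidentity $a_j\circ(w_1,\ldots,w_n)^\omega=a_j$ holds in $\mathsf{G}_p$; consequently $\Omega_A^{\sigma_p}\mathsf{G}_p=\Omega_A^{\kappa}\mathsf{G}_p$, which is the free group on $A$ (free groups being residually $p$), and the word problem reduces to the free group. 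This is the same mechanism as Theorem~\ref{th1} and its corollary in the present paper.

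For $\sigma_p$-fullness your compactness sketch does not go through: there are infinitely many possible ``outer patterns'' $(w_1,\ldots,w_n)^\omega$, of unbounded arity, so no pigeonhole or cofinal-subnet extraction produces a common pattern from the automaton of $L$; and even granting such a pattern, ``stabilisation of the iterates'' does not show that the limit is the $\psi_{\mathsf{G}_p}$-image of an element of $Cl(L)\cap\Omega_A^{\sigma_p}\mathsf{S}$, which is what fullness requires. The actual proof is an induction on a rational expression for $L$ (finite sets, union, product, plus), as in Corollary~\ref{cor1} here and Theorem 6.1 of \cite{Jorge:2002}: one uses the Ribes--Zalesski\u{\i} product theorem \cite{Ribes&Zalesskii:1993b} to split pro-$p$ closures of products, the Margolis--Sapir--Weil description \cite{Margolis&Sapir&Weil:2001} of the pro-$p$ closure $K$ of a finitely generated $H=\left<Y\right>$ together with a basis of $K$ and a subfamily of generators of $H$ dense in $K$, and then the identity $w_j=a_j\circ(h_1,\ldots,h_s)^\omega$ valid in $\mathsf{G}_p$ to realize each basis element of $K$ --- hence each element of $Cl_{\mathsf{G}_p,\kappa}(\psi_{\mathsf{G}_p}(L))$ --- explicitly as the image of an element of $Cl(L)\cap\Omega_A^{\sigma_p}\mathsf{S}$. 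This explicit realization, not a net argument, is the technical heart, and it is the missing content in your proposal.
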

We use the implicit operations in the preceding  theorem and we prove the following theorem:
\begin{theorem}
Let $\sigma$ be the set of all  implicit signature  obtained by adding to the
canonical signature $\kappa$ all implicit operations of the form
$$ \left(a_j\circ(w_1,\ldots, w_n)^{\omega}\right)^{m^\omega} \ \ \ (m\in \mathbb{N})$$
with $j = 1,\ldots, n$,  $w_i$ are $\kappa$-terms such that $\det{M(w_1,\ldots,w_n)}\ne0$ and every prime number $p$ dividing $\det{M(w_1,\ldots,w_n)}$ also divides $m$. Then  $\mathsf{G}_{nil}$ is $\sigma$-tame.
\end{theorem}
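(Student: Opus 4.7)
The strategy is to verify the three defining conditions of $\sigma$-tameness of $\mathsf{G}_{nil}$ with respect to the class of systems of equations associated to finite directed graphs. Recursive enumerability of $\mathsf{G}_{nil}$ is clear, and decidability of the word problem in $\Omega_A^\sigma\mathsf{G}_{nil}$ is handled separately in Section~3. The bulk of the argument is therefore concentrated on $\sigma$-reducibility.

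I would invoke the equivalence theorem of the final section to reduce $\sigma$-reducibility of $\mathsf{G}_{nil}$ to $\sigma$-reducibility of each $\mathsf{G}_p$. The latter would be established by combining the known $\sigma_p$-tameness of $\mathsf{G}_p$ with Proposition~\ref{proposition3}, along the chain: $\sigma_p$-tameness of $\mathsf{G}_p$ $\Rightarrow$ weak $\sigma$-reducibility of $\mathsf{G}_p$ for $\sigma$-term systems, together with $\sigma$-fullness of $\mathsf{G}_p$, $\Rightarrow$ $\sigma$-reducibility of $\mathsf{G}_p$.

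The key calculation underlying the translation between $\sigma$ and $\sigma_p$ is the following. Given a $\sigma$-operation $\xi=(a_j\circ(w_1,\ldots,w_n)^\omega)^{m^\omega}$, evaluation in $\mathsf{G}_p$ yields the identity when $p\mid m$, because $m^{k!}\to 0$ in $\mathbb{Z}_p$; otherwise $p\nmid m$, and the defining hypothesis on $\sigma$ forces $p\nmid\det M(w_1,\ldots,w_n)$, so $\langle w_1,\ldots,w_n\rangle$ is $\mathsf{G}_p$-dense and $\xi$ coincides in $\mathsf{G}_p$ with the $\sigma_p$-operation $a_j\circ(w_1,\ldots,w_n)^\omega$. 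Conversely, any $\sigma_p$-operation $a_j\circ(w_1,\ldots,w_n)^\omega$ is realised in $\mathsf{G}_p$ by the $\sigma$-operation obtained on setting $m$ to the product of the distinct prime divisors of $\det M(w_1,\ldots,w_n)$, which is nonzero because $\mathsf{G}_p$-density forces $\det M$ to be a $p$-adic unit. This dictionary converts any $\sigma$-term system into a $\sigma_p$-term one with the same solutions modulo $\mathsf{G}_p$, from which $\sigma_p$-tameness yields a $\sigma_p$-solution that can be re-expressed in $\sigma$-terms agreeing with it in $\mathsf{G}_p$.

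The main obstacle I anticipate is the equivalence theorem linking $\sigma$-reducibility of $\mathsf{G}_{nil}$ to that of the family $\{\mathsf{G}_p\}_p$. Its proof will exploit the decomposition of every finite nilpotent group as a direct product of its Sylow subgroups, which induces a product decomposition of $\overline{\Omega}_A\mathsf{G}_{nil}$ indexed by primes; the role of the exponent $m^\omega$ in $\sigma$ is precisely to isolate the finitely many Sylow components relevant to a given system, so that a global $\sigma$-solution may be patched together from prime-by-prime data. A secondary technical point is verifying $\sigma$-fullness for both $\mathsf{G}_p$ and $\mathsf{G}_{nil}$, which is what ensures that the constraint-satisfaction conditions $\delta(x)\in Cl(L_x)$ transfer correctly under the signature translation described above.
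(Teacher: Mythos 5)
Your plan follows the paper's architecture (word problem handled separately; $\sigma$-reducibility of each $\mathsf{G}_p$ via weak reducibility plus $\sigma$-fullness and Proposition~\ref{proposition3}; an equivalence theorem transferring reducibility between $\{\mathsf{G}_p\}_p$ and $\mathsf{G}_{nil}$; the dictionary collapsing the new $\sigma$-operations over $\mathsf{G}_p$, which is Theorem~\ref{th1}), but as a proof it has a genuine gap at the step where a $\sigma_p$-solution is ``re-expressed in $\sigma$-terms agreeing with it in $\mathsf{G}_p$''. Agreement over $\mathsf{G}_p$ is not enough: a solution must also satisfy $\delta(x)\in Cl(L_x)$, a closure computed in $\overline{\Omega}_A\mathsf{S}$, and replacing a pseudoword by another one that merely has the same image in $\overline{\Omega}_A\mathsf{G}_p$ can destroy that membership. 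This is precisely why the paper proves $\sigma$-fullness of $\mathsf{G}_p$ from scratch (Corollary~\ref{cor1}), by induction on rational expressions, and why it needs Lemma~\ref{lemma51} and Propositions~\ref{lemma52} and~\ref{proposition2}, whose whole point is to manufacture $\sigma$-words that lie inside $Cl(H_i)$ (hence remain compatible with the constraints over $\mathsf{S}$) while taking prescribed values prime by prime, using the carefully chosen exponents $n_0^{\omega}$, $n_i^{\omega}$ and $(\omega-1)(p_i n_i)^{\omega}$. You relegate $\sigma$-fullness to a ``secondary technical point'' (and propose verifying it also for $\mathsf{G}_{nil}$, which the paper never needs, since the transfer is done by the equivalence theorem rather than by Proposition~\ref{proposition3} applied to $\mathsf{G}_{nil}$), but it is the core of the reducibility argument and your plan gives no method for establishing it.

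The second gap is in the equivalence theorem itself, which you explicitly defer as the ``main obstacle''. Patching prime-by-prime data into a single $\sigma$-solution is only possible because all but finitely many primes can be treated uniformly: Lemma~\ref{lemma51} provides one overgroup $K$ that is the pro-$\mathsf{G}_p$ closure of $H$ for a cofinite set of primes, and Proposition~\ref{lemma52} then yields a single $\sigma$-word in $Cl(H)$ that has the prescribed value in $\mathsf{G}_p$ for every $p$ in a cofinite set $S$ and is trivial at the remaining primes; in the paper's proof this is what allows one solution $\delta_0$ (taken for a single prime $q\in S$) to serve all $p\in S$, leaving only finitely many exceptional primes to be corrected via Proposition~\ref{proposition2} and multiplied into $\delta'(x)$. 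Without this cofiniteness/uniformity input, the claim that the exponent $m^{\omega}$ ``isolates the finitely many relevant Sylow components'' is unsupported: a priori each prime could demand its own solution, and an infinite product of correction factors is not a $\sigma$-word. So the proposal correctly identifies the paper's route, but the two steps it leaves open are exactly where the paper's new content lies.
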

Note that by \cite{Margolis&Sapir&Weil:2001}, for a prime number $p$ and $\kappa$-words $w_1,\ldots, w_n$, the subgroup $\left<w_1,\ldots,w_n\right>$ is $\mathsf{G}_p$-dense if and only if $\det{M(w_1,\ldots,w_n)}\nequiv0 \pmod{p}$ .

\section{The pseudovariety \texorpdfstring{$\mathsf{G}_{nil}$}{TEXT} is \texorpdfstring{$\sigma$}{TEX}-tame}
\subsection{The word problem is decidable  in \texorpdfstring{$\Omega_A^{\sigma}\mathsf{G}_{nil}$}{TEXT}}
We use the following lemmas to reduced the word problem in $\Omega_A^{\sigma}\mathsf{G}_{nil}$ to the the word problem in the free group generated by $A$.
\begin{lemma}\label{lemma53}
Let $u,v\in \overline{\Omega}_A\mathsf{S}$. Then the pseudoidentity  $u=v$ is valid in $\mathsf{G}_{nil}$ if and only if,  for every prime number $p$, the pseudoidentity  $u=v$ is valid in $\mathsf{G}_p$.
\end{lemma}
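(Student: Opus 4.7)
The plan is to exploit the classical structure theorem for finite nilpotent groups: a finite group is nilpotent if and only if it decomposes as a direct product of its Sylow $p$-subgroups, $G \cong \prod_{p \mid |G|} G_p$, where each $G_p$ lies in $\mathsf{G}_p$. This reduces the validity of a pseudoidentity in $\mathsf{G}_{nil}$ to a pointwise verification across the prime-indexed factors.

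First I would dispose of the forward implication, which is immediate: for every prime $p$, the inclusion $\mathsf{G}_p \subseteq \mathsf{G}_{nil}$ forces any pseudoidentity valid in $\mathsf{G}_{nil}$ to be valid in $\mathsf{G}_p$. For the nontrivial direction, assume $u = v$ is valid in $\mathsf{G}_p$ for every prime $p$. To show that the pseudoidentity holds in $\mathsf{G}_{nil}$, it suffices to fix an arbitrary $G \in \mathsf{G}_{nil}$ together with a continuous homomorphism $\varphi : \overline{\Omega}_A\mathsf{S} \to G$ and verify $\varphi(u) = \varphi(v)$. Using the Sylow decomposition $G = \prod_p G_p$ (with only finitely many nontrivial factors), compose $\varphi$ with the canonical projection $\pi_p : G \to G_p$ to obtain a continuous homomorphism $\pi_p \circ \varphi : \overline{\Omega}_A\mathsf{S} \to G_p$ into a finite $p$-group. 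By hypothesis the pseudoidentity $u = v$ holds in $\mathsf{G}_p$, so $\pi_p(\varphi(u)) = \pi_p(\varphi(v))$ for every $p$; since the family of projections jointly separates points of $G$, one concludes $\varphi(u) = \varphi(v)$.

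No serious obstacle is expected: the argument is a direct application of the elementary observation that $\mathsf{G}_{nil}$ coincides, as a pseudovariety, with the join $\bigvee_{p\text{ prime}} \mathsf{G}_p$, together with the fact that satisfaction of a pseudoidentity is preserved under finite direct products. The only point worth being careful about is that the decomposition into Sylow subgroups must be read as a decomposition inside the category of finite groups, which is automatic here because every member of $\mathsf{G}_{nil}$ is finite.
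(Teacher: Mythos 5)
Your argument is correct and is essentially the paper's own proof: the forward direction follows from the inclusion $\mathsf{G}_p \subseteq \mathsf{G}_{nil}$, and the converse uses exactly the decomposition of a finite nilpotent group as the direct product of its Sylow $p$-subgroups, with your projection argument just spelling out why satisfaction transfers across that direct product. No gaps; the extra detail about separating points is a fine (if routine) elaboration of what the paper leaves implicit.
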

\begin{proof}
Since every $p$-group is a nilpotent group, if $\mathsf{G}_{nil}$ satisfies the pseudoidentity  $u=v$, then for every prime number $p$, $\mathsf{G}_p$ satisfies the pseudoidentity  $u=v$.

 The converse follows from fact that every finite nilpotent
group is isomorphic to the direct product of its $p$-Sylow subgroups.
\end{proof}

We denote by $\mathds{P}$, the set of all prime numbers.
\begin{lemma}\label{lemma54}
Fix $p\in \mathds{P}$. The pseudoidentity  $x^{n^\omega}=x$ is valid in $\mathsf{G}_p$ if $n$ is not divisible by $p$ and the
pseudoidentity $x^{n^\omega}=1$ is valid in $\mathsf{G}_p$ otherwise.
\end{lemma}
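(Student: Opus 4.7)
The plan is to compute $x^{n^\omega}$ pointwise in an arbitrary finite $p$-group $G$, since a pseudoidentity holds in $\mathsf{G}_p$ if and only if it holds in every member of $\mathsf{G}_p$. Given $x\in G$, the element has order $p^a$ for some $a\ge 0$, so $x^{n^{k!}}$ depends only on the residue of $n^{k!}$ modulo $p^a$. The value $x^{n^\omega}$ is then read off as the limit of the eventually constant sequence $x^{n^{k!}}$ obtained by analyzing this residue.

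For the first case, assume $p\nmid n$. Then $n$ is a unit in $\mathbb{Z}/p^a\mathbb{Z}$, whose multiplicative group has order $\varphi(p^a)=p^{a-1}(p-1)$. For $k$ sufficiently large, $k!$ is divisible by $p^{a-1}(p-1)$, so by Euler's theorem $n^{k!}\equiv 1\pmod{p^a}$; hence $x^{n^{k!}}=x$ for all large $k$, and $x^{n^\omega}=x$ in $G$. For the second case, assume $p\mid n$. Then the $p$-adic valuation satisfies $v_p(n^{k!})\ge k!$, and for $k$ large enough $k!\ge a$, so $p^a\mid n^{k!}$; hence $x^{n^{k!}}=1$ for all large $k$, and $x^{n^\omega}=1$ in $G$.

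Since $G$ and $x$ were arbitrary, the pseudoidentity $x^{n^\omega}=x$ is satisfied by every finite $p$-group when $p\nmid n$, and the pseudoidentity $x^{n^\omega}=1$ is satisfied by every finite $p$-group when $p\mid n$. There is no real obstacle here: the only subtlety is the definition of $x^{n^\omega}$ as a limit in $\overline{\Omega}_{\{x\}}\mathsf{S}$, but once one observes that the image in a finite $p$-group stabilizes for large $k$ (because exponents of $x$ live in a finite cyclic group of $p$-power order), the computation reduces to the two elementary number-theoretic facts above.
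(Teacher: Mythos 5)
Your proof is correct and is essentially the paper's argument spelled out in detail: the paper simply says the lemma "follows from the elementary Euler congruence theorem," and your pointwise computation in a finite $p$-group via Euler's theorem (unit case) and $p$-adic valuation (non-unit case) is exactly that argument made explicit.
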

\begin{proof}
The result follows from the elementary Euler congruence theorem.
\end{proof}

\begin{theorem}\label{th1}
For every pseudoword $u\in \Omega_A^{\sigma}\mathsf{S}$, there is a computable  cofinite subset $S(u)$ of $\mathds{P}$ such that  the following properties hold:
\begin{itemize}
  \item there is a computable $\kappa$-word $w_0$ such that for every $p\in S(u)$, the pseudoidentity $u=w_0$ is valid in $\mathsf{G}_p$;
  \item for every $p_i\in\{p_1,\ldots,p_r\}=\mathds{P}\setminus S(u)$, there is a computable  $\kappa$-word $w_i$ such that  the pseudoidentity $u=w_i$ is valid in $\mathsf{G}_{p_i}$ $(1\leq i\leq r)$.
\end{itemize}
In particular, for every prime number $p$, we have $\Omega^{\sigma}_A\mathsf{G}_{p}=\Omega^{\kappa}_A\mathsf{G}_{p}$.
\end{theorem}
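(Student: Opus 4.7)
The plan is to proceed by structural induction on the $\sigma$-term $u$, carrying along at each stage a computable cofinite set $S(u)\subseteq\mathds{P}$ together with a $\kappa$-word $w_0$ valid on $S(u)$ and separate $\kappa$-words for the finitely many exceptional primes. The base case $u\in A$ is immediate: take $S(u)=\mathds{P}$, $w_0=u$, and no exceptions. The cases in which $u$ is built from sub-$\sigma$-terms by multiplication or by the unary operation $x^{\omega-1}$ are routine: intersect the cofinite sets of the sub-terms (the result is still cofinite and computable), combine the associated $\kappa$-words by applying the same operation componentwise, and deal with each of the finitely many exceptional primes separately.

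The heart of the argument is the inductive step for the new operations in $\sigma$. Suppose $u=f(u_1,\ldots,u_n)$ with $f=\bigl(a_j\circ(v_1,\ldots,v_n)^{\omega}\bigr)^{m^\omega}$, where the $v_i$ are $\kappa$-words satisfying $\det M(v_1,\ldots,v_n)\ne 0$ and every prime dividing $\det M(v_1,\ldots,v_n)$ also divides $m$. I would split on $p$ via Lemma~\ref{lemma54}. If $p\mid m$, then $x^{m^\omega}=1$ in $\mathsf{G}_p$, so $f(u_1,\ldots,u_n)$ equals the $\kappa$-term $a\,a^{\omega-1}$ (for any letter $a\in A$) in $\mathsf{G}_p$. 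If $p\nmid m$, the hypothesis on $m$ forces $p\nmid\det M(v_1,\ldots,v_n)$, so by the Margolis--Sapir--Weil criterion recalled right after the preceding theorem, $\langle v_1,\ldots,v_n\rangle$ is $\mathsf{G}_p$-dense.

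The key technical step, and the main obstacle, is to see that in the latter case $a_j\circ(v_1,\ldots,v_n)^\omega$ coincides with the $j$-th projection in $\mathsf{G}_p$. The operator $(v_1,\ldots,v_n)$ corresponds to the unique continuous endomorphism $\phi$ of the free pro-$p$ group $\overline{\Omega}_n\mathsf{G}_p$ with $\phi(a_i)=v_i$; density says $\phi$ is onto, and since finitely generated profinite (hence pro-$p$) groups are Hopfian, $\phi$ is an automorphism, so its $\omega$-power is the identity in the corresponding profinite monoid. By the functoriality of $S\mapsto\mathcal{O}_n(S)$ (Proposition~2.2), $(v_1,\ldots,v_n)^\omega$ therefore acts as the identity operator on every pro-$\mathsf{G}_p$ semigroup. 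Combined with $x^{m^\omega}=x$ in $\mathsf{G}_p$ when $p\nmid m$, this yields $f(u_1,\ldots,u_n)=u_j$ in $\mathsf{G}_p$, so one applies the inductive hypothesis to $u_j$ and takes $S(u)=S(u_j)\setminus\{p:p\mid m\}$ with $w_0$ inherited from $u_j$; the finite list of exceptional primes, together with their individual $\kappa$-words, is updated by merging those of $u_j$ with the primes dividing $m$ (for which the $\kappa$-word $a\,a^{\omega-1}$ is used, overriding any inherited choice).

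The \emph{In particular} clause is then automatic: for any fixed prime $p$, every element of $\Omega_A^\sigma\mathsf{G}_p$ is the image $\psi_{\mathsf{G}_p}(u)$ of some $\sigma$-term $u\in\Omega_A^\sigma\mathsf{S}$, and the theorem supplies a $\kappa$-word $w$ with $\psi_{\mathsf{G}_p}(u)=\psi_{\mathsf{G}_p}(w)\in\Omega_A^\kappa\mathsf{G}_p$; the reverse inclusion is trivial from $\kappa\subseteq\sigma$. Computability of $S(u)$, of $w_0$, and of each exceptional $\kappa$-word is preserved throughout, as each inductive step only involves products, the $\omega-1$ operation, and intersections with the effectively computable finite sets of prime divisors of the relevant $m$ and $\det M$.
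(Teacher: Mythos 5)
Your proposal is correct and follows essentially the same route as the paper: reduce to the operations in $\sigma\setminus\kappa$, split primes according to whether $p$ divides $m$, and use the Margolis--Sapir--Weil determinant criterion together with Lemma~\ref{lemma54} and the fact that a $\mathsf{G}_p$-dense tuple makes $a_j\circ(v_1,\ldots,v_n)^\omega$ act as the $j$-th projection. The only difference is that where the paper simply cites \cite[Lemma 6.5]{Jorge:2002} for that last fact, you re-derive it via Hopficity of finitely generated pro-$p$ groups and the identity $\phi^\omega=\mathrm{id}$ for units of a profinite monoid, and you spell out the structural induction (including the composition step $f=a_j$ in $\mathsf{G}_p$ implying $f(u_1,\ldots,u_n)=u_j$) somewhat more explicitly than the paper does.
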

\begin{proof}
Since every $u\in \Omega_A^{\sigma}\mathsf{S}$ is constructed from the letters in $A$ using a finite number of times  the operations  in $\sigma$ and the intersection of a finite number of cofinite sets is cofinite,
it is enough to show that the statement of the theorem holds for  every pseudoword $u\in \sigma\setminus \kappa$.

  Consider the following pseudoword in $\sigma\setminus \kappa$
$$u= \left(a_j\circ(w_1,\ldots, w_n)^{\omega}\right)^{m^\omega} \ \ \ (m\in \mathbb{N})$$
where $j = 1,\ldots, n$,  $w_i$ are $\kappa$-terms such that $\det{M(w_1,\ldots,w_n)}\ne0$ and every prime number $p$ dividing $\det{M(w_1,\ldots,w_n)}$ also divides $m$. Let
$$S(u)=\mathds{P}\setminus\{p\mid p \ \text{divides} \ m\}.$$
If $p$ lies in $S(u)$, then by \cite[Corollary 3.3]{Margolis&Sapir&Weil:2001}, the subgroup $\left<w_1,\ldots,w_n\right>$ is $\mathsf{G}_p$-dense in $\overline{\Omega}_A\mathsf{G}_p$. Hence, by \cite[Lemma 6.5]{Jorge:2002}, the following pseudoidentity holds in $\mathsf{G}_p$
$$a_j\circ(w_1,\ldots, w_n)^{\omega}=a_j.$$
 Consider $w_0=a_j$.

  Otherwise, $p$ divides $m$ and therefore by Lemma \ref{lemma54}, the pseudoidentity $$\left(a_j\circ(w_1,\ldots, w_n)^{\omega}\right)^{m^\omega}=1$$ holds in $\mathsf{G}_p$.
 \end{proof}
\begin{corollary}
The word problem is decidable in $\Omega^{\sigma}_A\mathsf{G}_{nil}$.
\end{corollary}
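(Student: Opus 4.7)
The plan is to reduce the word problem in $\Omega^{\sigma}_A\mathsf{G}_{nil}$ to the word problem in the free group $F_A$ on the alphabet $A$, via Theorem~\ref{th1}, Lemma~\ref{lemma53}, and the classical theorem of Magnus that $F_A$ is residually a finite $p$-group for every prime $p$.

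Given two $\sigma$-terms $u,v\in\Omega^{\sigma}_A\mathsf{S}$, I first apply Theorem~\ref{th1} to each. This yields a computable cofinite set $S:=S(u)\cap S(v)\subseteq\mathds{P}$, together with computable $\kappa$-words $w_0^u,w_0^v$ satisfying $u=w_0^u$ and $v=w_0^v$ in $\mathsf{G}_p$ for every $p\in S$, and, for each of the finitely many exceptional primes $p_1,\ldots,p_r\in\mathds{P}\setminus S$, computable $\kappa$-words $w_i^u,w_i^v$ satisfying $u=w_i^u$ and $v=w_i^v$ in $\mathsf{G}_{p_i}$. Since in the group case the operation $x^{\omega-1}$ is simply $x^{-1}$, all these $\kappa$-words are elements of the free group $F_A$.

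By Lemma~\ref{lemma53}, the pseudoidentity $u=v$ holds in $\mathsf{G}_{nil}$ if and only if it holds in $\mathsf{G}_p$ for every prime $p$, which by the previous step is equivalent to the finite conjunction of (i) $w_0^u=w_0^v$ in $\mathsf{G}_p$ for all $p\in S$ and (ii) $w_i^u=w_i^v$ in $\mathsf{G}_{p_i}$ for each $i=1,\ldots,r$. Magnus's theorem implies that $F_A$ embeds into $\overline{\Omega}_A\mathsf{G}_p$ for every single prime $p$, so for any fixed $p$ two elements of $F_A$ are equal in $\mathsf{G}_p$ if and only if they are already equal in $F_A$. Therefore both (i) and each instance of (ii) collapse to equalities in $F_A$, a problem decidable by, e.g., Nielsen reduction. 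Assembling the finitely many checks produces the desired algorithm.

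The main obstacle is essentially non-existent once Theorem~\ref{th1} is in hand: the argument is a bookkeeping exercise combining that theorem with the ``prime-by-prime'' reduction of Lemma~\ref{lemma53}, the only genuine outside input being Magnus's residual $p$-finiteness of free groups, which is precisely what allows every $\mathsf{G}_p$-equality among $\kappa$-words to be tested inside $F_A$.
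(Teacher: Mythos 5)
Your proposal is correct and follows essentially the same route as the paper: reduce to $\mathsf{G}_p$ prime by prime via Lemma~\ref{lemma53}, replace $u$ and $v$ by computable $\kappa$-words using Theorem~\ref{th1}, and decide the resulting finitely many equalities in the free group (the paper phrases this as checking $uv^{\omega-1}=1$ over $\mathsf{G}_p$ and invoking decidability of the free-group word problem, with residual $p$-finiteness of $F_A$ implicit where you cite Magnus explicitly). No gaps; the only difference is your slightly more explicit bookkeeping of the exceptional prime sets.
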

\begin{proof}
Let $u,v\in \Omega^{\sigma}_A\mathsf{S}$. Then by Lemma \ref{lemma53}, the pseudoidentity $u=v$ holds in $\mathsf{G}_{nil}$ if and only if for every prime number $p$, the pseudoidentity $u=v$ holds in $\mathsf{G}_{p}$.

Since $\mathsf{G}_{p}$ is a pseudovariety of finite groups, the pseudoidentity $u=v$ is valid in $\mathsf{G}_{p}$ if and only if the pseudoidentity $uv^{\omega-1}=1$ is valid in $\mathsf{G}_{p}$. Now  the result follows from the preceding lemma and the fact that the word problem is decidable in the free group.
\end{proof}

\subsection{The pseudovariety  \texorpdfstring{$\mathsf{G}_{nil}$}{TEXT}
  is \texorpdfstring{$\sigma$} {TEXT}-reducible}

We denote by $FG(A)$, the free group over $A$  and
for a finitely generated (f.g.)~subgroup $H$ of a free group, denote by $\mathds{P}(H)$, the set of all prime numbers $p$ such that $H$ is $\mathsf{G}_p$-closed.

\begin{proposition}\cite[proposition 4.3]{Margolis&Sapir&Weil:2001}
Let $H$ be a f.g.~subgroup of $FG(A)$. The set $\mathds{P}(H)$
is a  finite or a cofinite subset of $\mathds{P}$, and it is effectively computable.
\end{proposition}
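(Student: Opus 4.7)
The plan is to associate to $H$ its Stallings graph $\Gamma(H)$ --- the finite, folded, $A$-labeled graph whose fundamental group at the basepoint is canonically identified with $H$ --- and to reduce the question of $\mathsf{G}_p$-closedness to a linear-algebraic condition over $\mathbb{F}_p$. Both $\Gamma(H)$ and the subsequent linear data will be effectively computable from any finite generating set of $H$ (via Stallings foldings), so the effectivity assertion should follow automatically once the dichotomy is proved. The core step will be a characterization of the $\mathsf{G}_p$-closure of $H$ in terms of $\Gamma(H)$: following the philosophy already visible in the $\mathsf{G}_p$-density statement of Corollary~3.3 of \cite{Margolis&Sapir&Weil:2001} invoked earlier in the excerpt, I would expect $\mathsf{G}_p$-closedness to be governed by the incidence of fundamental cycles of $\Gamma(H)$ against the letters of $A$, encoded in an integer matrix $M_H$ (a relative of the matrix $M(w_1,\ldots,w_n)$ of the paper). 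Concretely, $H$ should be $\mathsf{G}_p$-closed if and only if reduction of $M_H$ modulo $p$ introduces no new relations in the $\mathbb{F}_p$-homology of $\Gamma(H)$ beyond those already present integrally.

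Given this criterion, the finite/cofinite dichotomy falls out of the Smith normal form of $M_H$, whose invariant factors form a computable finite list of integers. Only finitely many primes divide any given one of those factors, so the set of ``exceptional'' primes --- those for which the mod-$p$ reduction of $M_H$ behaves anomalously relative to its integral behaviour --- is automatically finite. Depending on whether the generic behaviour at a prime is closure or non-closure (which is controlled by the rank of $M_H$ relative to $|A|$, i.e.\ essentially by whether the abelianisation of $H$ has full rank in $\mathbb{Z}^{|A|}$), $\mathds{P}(H)$ is respectively cofinite or finite, and in either case the finite set of ``switch'' primes is explicitly readable off the Smith normal form of $M_H$, yielding the effectivity claim.

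The principal obstacle in this plan is the characterization step itself: establishing the equivalence between the manifestly non-abelian statement ``$H$ is closed in the pro-$p$ topology of $FG(A)$'' and the abelian condition on $M_H$. One must certify that every obstruction to $\mathsf{G}_p$-closedness in $FG(A)$ is already detected by first-order mod-$p$ information on $\Gamma(H)$, with no hidden obstructions coming from higher layers of the pro-$p$ lower central series. This is handled in \cite{Margolis&Sapir&Weil:2001} via the Magnus embedding of $FG(A)$ into the ring of noncommutative power series over $\mathbb{F}_p$, which linearises the pro-$p$ topology sufficiently for the matrix-based picture above to capture all the relevant closure information; once that linearisation is in place, the dichotomy and the effectivity statement reduce to the Smith-normal-form bookkeeping of the preceding paragraph.
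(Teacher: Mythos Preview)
The paper does not prove this proposition at all: it is quoted verbatim from \cite{Margolis&Sapir&Weil:2001} and used as a black box in the proof of the next lemma. So there is no ``paper's own proof'' to compare against; the relevant comparison is with the original argument of Margolis--Sapir--Weil.

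That said, your proposed criterion is not correct. You suggest that $H$ is $\mathsf{G}_p$-closed if and only if the abelianisation matrix $M_H$ (the incidence matrix of a basis of $H$ against the letters of $A$) keeps its rank when reduced modulo~$p$. Take $A=\{a,b\}$ and $H=\langle a^2,b\rangle$. Here $M_H=\left(\begin{smallmatrix}2&0\\0&1\end{smallmatrix}\right)$, so your criterion predicts that $H$ is $\mathsf{G}_p$-closed for every odd $p$ and not for $p=2$. The truth is exactly the opposite: $H$ has index~$2$ with quotient $\mathbb{Z}/2$, so it is $\mathsf{G}_2$-open (hence closed), while for odd $p$ the element $a$ lies in $Cl_{\mathsf{G}_p,\kappa}(H)\setminus H$ because $a=(a^2)^{(p^k+1)/2}$ in any $p$-group quotient. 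Thus $\mathds{P}(H)=\{2\}$, not its complement. More generally, pro-$p$ closedness is not an invariant of the map $H^{\mathrm{ab}}\to\mathbb{Z}^{|A|}$; the Magnus embedding linearises the pro-$p$ topology on $FG(A)$, but it does not reduce closedness of an \emph{arbitrary} finitely generated subgroup to a single first-layer matrix condition.

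The actual mechanism in \cite{Margolis&Sapir&Weil:2001}---and the one the present paper uses immediately afterwards in Lemma~\ref{lemma51}---is the finite lattice of \emph{overgroups} of $H$: the finitely many subgroups $K$ whose Stallings automaton is a quotient of that of $H$. For every prime $p$ the closure $Cl_{\mathsf{G}_p,\kappa}(H)$ is one of these overgroups, and deciding which one amounts to a finite collection of determinant conditions (one for each edge of the overgroup lattice, via the density criterion of \cite[Corollary~3.3]{Margolis&Sapir&Weil:2001}). Only finitely many primes divide any of these determinants, which yields the finite/cofinite dichotomy and the effectivity. Your Smith-normal-form endgame is perfectly sound once one has such a finite list of integer obstructions; the gap is that the list does not come from one abelianisation matrix $M_H$ but from the whole overgroup lattice of~$\Gamma(H)$.
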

\begin{lemma}\label{lemma51}
Let $H$ be a f.g.~subgroup of $FG(A)$. Then there is a cofinite subset $S(H)$ of $\mathds{P}$ and a f.g.~subgroup $K$ of $FG(A)$ such that for every $p\in S(H)$, $Cl_{\mathsf{G}_p,\kappa}(H)=K$.
\end{lemma}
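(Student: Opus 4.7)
The plan is to apply the cited Margolis--Sapir--Weil dichotomy twice: first to $H$ directly, and then to the finitely many overgroups that can arise as pro-$p$ closures of $H$. As a preliminary, I identify $Cl_{\mathsf{G}_p,\kappa}(H)$ with the topological closure of $H$ in the pro-$p$ topology of $FG(A)$; this is legitimate because $x^{\omega-1}$ evaluates to $x^{-1}$ on any pro-$p$ group, so $\Omega^{\kappa}_A\mathsf{G}_p$ coincides with $FG(A)$. Applying the Margolis--Sapir--Weil proposition to $H$: if $\mathds{P}(H)$ is cofinite, I take $K:=H$ and $S(H):=\mathds{P}(H)$, and the claim is immediate, since $Cl_{\mathsf{G}_p,\kappa}(H)=H$ for every $p\in\mathds{P}(H)$.

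In the remaining case where $\mathds{P}(H)$ is finite, I will invoke the Stallings graph description of $Cl_p(H)$ from the Margolis--Sapir--Weil paper: the inverse automaton of $Cl_p(H)$ is obtained from the finite Stallings graph $\Gamma(H)$ by a sequence of vertex identifications, never increasing the vertex count. Since there are only finitely many vertex partitions of the vertex set of $\Gamma(H)$, the set
$$\mathcal{K}:=\{Cl_p(H): p\in\mathds{P}\}$$
is finite; list it as $\mathcal{K}=\{K_1,\ldots,K_N\}$. I then apply the cited dichotomy to each $K_i$, splitting $\mathcal{K}=\mathcal{C}\sqcup\mathcal{F}$ according to whether $\mathds{P}(K_i)$ is cofinite or finite, and put
$$S(H):=\Bigl(\bigcap_{K_i\in\mathcal{C}}\mathds{P}(K_i)\Bigr)\setminus\Bigl(\bigcup_{K_i\in\mathcal{F}}\mathds{P}(K_i)\Bigr),$$
which is a cofinite subset of $\mathds{P}$. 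For $p\in S(H)$, the members of $\mathcal{K}$ that are $\mathsf{G}_p$-closed are exactly those in $\mathcal{C}$; since $Cl_p(H)\in\mathcal{K}$ is itself $\mathsf{G}_p$-closed and is contained in every $\mathsf{G}_p$-closed subgroup of $FG(A)$ containing $H$, it coincides with the minimum of $(\mathcal{C},\subseteq)$. This minimum is a single subgroup $K$, independent of $p\in S(H)$, so the pair $(S(H),K)$ works.

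The main obstacle will be justifying the finiteness of $\mathcal{K}$, which rests on the Margolis--Sapir--Weil Stallings graph algorithm for computing $Cl_p(H)$ (the algorithm terminates after a bounded number of vertex-identification steps on $\Gamma(H)$, and the outcome is a quotient of $\Gamma(H)$ by a vertex partition). Once this input is granted, the rest of the proof is a clean combinatorial step: finitely many applications of the cited dichotomy combined with intersections of cofinite sets, together with the routine verification that the minimum of $(\mathcal{C},\subseteq)$ is well defined and is realized by $Cl_p(H)$ for every $p\in S(H)$.
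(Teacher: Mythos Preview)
Your proof is correct and follows essentially the same approach as the paper: both rely on the finiteness of the set of possible pro-$p$ closures of $H$ (as quotients of the Stallings graph, from Margolis--Sapir--Weil) together with the finite/cofinite dichotomy for $\mathds{P}(K_i)$, and both identify $K$ as the unique minimal element among those candidate closures whose $\mathds{P}$-set is cofinite. Your organization is slightly cleaner---you partition $\mathcal{K}$ into $\mathcal{C}\sqcup\mathcal{F}$ up front and read off $K=\min\mathcal{C}$ directly, whereas the paper first isolates one $K_j$ by pigeonhole and then argues by contradiction that no proper sub-overgroup of $K_j$ has cofinite $\mathds{P}$-set---but the substance is the same.
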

\begin{proof}
 Let $K_1,\ldots,K_m$ be the set of all overgroups of the subgroup $H$ (i.e., the automaton of $K_i$ is a quotient of the automaton of $H$). For every prime number $p$, there is $i$ ($1\leq i\leq m$) such that $Cl_{\mathsf{G}_p,\kappa}(H)=K_i$ \cite{Margolis&Sapir&Weil:2001}.  Since the number of overgroups of $H$ is a finite set, there is $K_{j}$  and  an infinite subset $S$ of $\mathds{P}$   such that for every $p\in S$, we have $Cl_{\mathsf{G}_p,\kappa}(H)=K_{j}$ $(1\leq j\leq m)$. If $S$ is cofinite, then we are done. For every $p\in S$, we have
$$K_{j}\subseteq Cl_{\mathsf{G}_p,\kappa}(K_{j})\subseteq Cl_{\mathsf{G}_p,\kappa}(Cl_{\mathsf{G}_p,\kappa}(H))= Cl_{\mathsf{G}_p,\kappa}(H)=K_{j}$$
 and hence, $S\subseteq \mathds{P}(K_{j})$. Therefore, by the preceding proposition, $\mathds{P}(K_{j})$ is a cofinite subset of $\mathds{P}$.

 Suppose that there is an overgroup $L$ of $H$ properly contained in $K_j$ such that $\mathds{P}(L)$ is a cofinite subset of $\mathds{P}$. Then for every $p\in \mathds{P}(L)$, we have
 $$Cl_{\mathsf{G}_p,\kappa}(H)\subseteq  Cl_{\mathsf{G}_p,\kappa}(L)=L\varsubsetneq K_j.$$
 Hence, the set of all prime numbers $q$ such that $Cl_{\mathsf{G}_q,\kappa}(H)=K_j$ is contained in the set $\mathds{P}\setminus \mathds{P}(L)$. Since we assume that $\mathds{P}(L)$ is a cofinite subset of $\mathds{P}$, the set $\mathds{P}\setminus \mathds{P}(L)$ is a finite set which contradicts the choice of $K_j$.

 So, for every overgroup $L$ of $H$ properly  contained in $K_{j}$, $\mathds{P}(L)$ is a finite set. Let  $K=K_{j}$ and
 \begin{equation*}
   S(H)=\mathds{P}(K)\setminus\bigcup_{\mathclap{\substack{L\  \text{overgroup of}\ H \\
   L\subset K}}} \mathds{P}(L).
   \end{equation*}
   As the number of over groups of $H$ is finite,  $S(H)$ is a cofinite subset of $\mathds{P}$. For every $p\in S(H)$,  we have
 $$Cl_{\mathsf{G}_p,\kappa}(H)\subseteq Cl_{\mathsf{G}_p,\kappa}(K)=K,$$
 hence, $Cl_{\mathsf{G}_p,\kappa}(H)$ is a $\mathsf{G}_p$-closed overgroup of $H$ contained in $K$,   by the choice of $p$, it follows that $Cl_{\mathsf{G}_p,\kappa}(H)=K$.
\end{proof}

The following two propositions are the main tools to show that the pseudovariety  $\mathsf{G}_{nil}$ is  $\sigma$-reducible.
\begin{proposition}\label{lemma52}
Let $H_1,\ldots, H_t$ be  f.g.~subgroups of the free group and fix  the cofinite sets $S(H_i)$ and the subgroups $K_i$ as in the preceding lemma. Let $w\in K_1\ldots K_t$. Then there are $u_i\in \Omega_A^{\sigma}\mathsf{S}\cap Cl(H_i)$ and  cofinite subsets $S_1(H_i)$ of $\mathds{P}$ contained in $S(H_i)$ such that, if $p\in \displaystyle\bigcap_{i=1}^tS_1(H_i)$, then the pseudoidentity $w=u_1\ldots u_t$ is valid in $\mathsf{G}_p$ and the pseudoidentity $u_i=1$ is valid in $\mathsf{G}_p$ otherwise.
 \end{proposition}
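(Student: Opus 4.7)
The plan is to construct each $u_i$ as an explicit $\sigma$-term built from elements of $H_i$ via the new operations in $\sigma$, so that for almost all primes $p$ the term evaluates to $k_i$ in $\mathsf{G}_p$, while on the finitely many remaining primes the outer $(-)^{m^\omega}$-wrap collapses it to $1$.

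Factor $w = k_1 \cdots k_t$ with $k_i \in K_i$, and write each $k_i$ as a reduced group word $W_i(a_1, \ldots, a_{|A|})$ in the letters of $A$. For each $i$ pick a tuple $h_{i,1}, \ldots, h_{i,|A|} \in H_i$ whose abelianisation matrix has nonzero determinant $d_i$, and let $m_i$ be a positive integer divisible by every prime divisor of $d_i$ and by every prime in the finite set $\mathds{P} \setminus S(H_i)$. Define the building blocks
\[
\beta_{i,j} := \bigl(a_j \circ (h_{i,1}, \ldots, h_{i,|A|})^\omega\bigr)^{m_i^\omega}, \qquad j = 1, \ldots, |A|,
\]
which lie in $\sigma$ by the condition on $m_i$. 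Since every finite iterate $a_j \circ (h_{i,1}, \ldots)^{k!}$ is an element of $H_i$ (iterated substitution of the $h_{i,\ell}$'s preserves $H_i$), the pseudoword $a_j \circ (h_{i,1},\ldots,h_{i,|A|})^\omega$ lies in $Cl(H_i)$, and hence so does $\beta_{i,j}$. Set $u_i := W_i(\beta_{i,1}, \ldots, \beta_{i,|A|})$; this is a $\sigma$-term lying in $Cl(H_i)$ because the latter is closed under the $\kappa$-operations used in $W_i$.

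Put $S_1(H_i) := S(H_i) \setminus \{p \in \mathds{P} : p \mid m_i\}$, a cofinite subset of $S(H_i)$. For $p \in S_1(H_i)$, Lemma~\ref{lemma54} trivialises the outer $m_i^\omega$-wrap and \cite[Lemma 6.5]{Jorge:2002} (which applies because $p \nmid d_i$) reduces the inner operation to $a_j$, so $\beta_{i,j} = a_j$ in $\mathsf{G}_p$ and $u_i = W_i(a_1, \ldots, a_{|A|}) = k_i$ in $\mathsf{G}_p$. For $p \notin S_1(H_i)$, the construction of $m_i$ forces $p \mid m_i$, whence Lemma~\ref{lemma54} gives $\beta_{i,j} = 1$ and so $u_i = W_i(1, \ldots, 1) = 1$ in $\mathsf{G}_p$. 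Multiplying across $i$ then yields $u_1 \cdots u_t = k_1 \cdots k_t = w$ in $\mathsf{G}_p$ for every $p \in \bigcap_{i=1}^t S_1(H_i)$.

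The main obstacle I anticipate is the selection of an $|A|$-tuple in $H_i$ with nonzero abelianisation determinant. When $H_i$ has rank at least $|A|$ this is immediate, but when its rank is strictly smaller no such tuple exists in $H_i$, and one must replace the inner operand by a tuple drawn from a suitable overgroup (for instance from generators of $K_i$) and then verify, using $K_i = Cl_{\mathsf{G}_p,\kappa}(H_i)$ for $p \in S(H_i)$, that the resulting pseudoword still lies in $Cl(H_i)$ rather than merely in $Cl(K_i)$. This is where I expect to need a careful interplay between the Stallings automata of $H_i$ and $K_i$ and the explicit reducibility formulas from \cite{Jorge:2002}.
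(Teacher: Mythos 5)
There is a genuine gap, and it is exactly the one you flag in your final paragraph but do not resolve. Your construction needs an $|A|$-tuple $h_{i,1},\ldots,h_{i,|A|}$ \emph{inside} $H_i$ whose exponent-sum matrix over $A$ has nonzero determinant $d_i$, so that \cite[Lemma 6.5]{Jorge:2002} gives $\beta_{i,j}=a_j$ in $\mathsf{G}_p$ for $p\nmid d_i$. Such a tuple exists only when the image of $H_i$ in $\mathbb{Z}^A$ has finite index (this is not implied by $rk(H_i)\geq |A|$: $H_i$ could lie in the commutator subgroup), and whenever it exists one gets $Cl_{\mathsf{G}_p,\kappa}(H_i)=FG(A)$ for all $p\nmid d_i$, i.e.\ $K_i=FG(A)$. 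So your argument covers only the degenerate case $K_i=FG(A)$, whereas the proposition is precisely about elements of $K_1\cdots K_t$ with the $K_i$ typically proper subgroups; in that case $u_i$ can never be congruent to an arbitrary $k_i\in K_i$ modulo $\mathsf{G}_p$ by an identity of the form $\beta_{i,j}=a_j$, because no subset of $H_i$ is $\mathsf{G}_p$-dense in the whole free group. The paper's proof supplies exactly the missing relative machinery: since $\mathsf{G}_p$ is extension-closed, $K_i$ has a basis $w_{1,i},\ldots,w_{s_i,i}$ of $\kappa$-words with $s_i=rk(K_i)\leq rk(H_i)$ \cite[Proposition 2.10]{Margolis&Sapir&Weil:2001}; one rewrites suitable generators of $H_i$ as words $h'_{k,i}$ over this basis, uses \cite[Proposition 2.9]{Margolis&Sapir&Weil:2001} together with \cite[Proposition 5.2]{Jorge:2002} to select $s_i$ of them generating a subgroup dense in the pro-$\mathsf{G}_p$ topology \emph{of $K_i$}, takes the $s_i\times s_i$ matrix with respect to the basis of $K_i$ (not of $A$), and applies the proof of \cite[Theorem 6.1]{Jorge:2002} to obtain $w_{j,i}=a_j\circ(h_{1,i},\ldots,h_{s_i,i})^\omega$ in $\mathsf{G}_p$ for every $p\in S(H_i)$ not dividing that determinant, the iterates staying in $H_i$ so that the limit lies in $Cl(H_i)$. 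The ``careful interplay'' you defer to is the actual proof, and it is absent.

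A secondary, fixable point: you use a separate exponent $m_i$ for each block, so you only get $u_i=1$ in $\mathsf{G}_p$ when $p\notin S_1(H_i)$, while the statement (and its later use in the reducibility theorem, where all the correcting factors must vanish simultaneously) requires every $u_i$ to equal $1$ in $\mathsf{G}_p$ for every $p$ outside the common intersection $\bigcap_{i=1}^t S_1(H_i)$. The paper first fixes $S_1(H_i)$ by removing the primes dividing the relative determinant and then uses a single exponent $n_0^\omega$, with $n_0$ the product of all primes in $\mathds{P}\setminus\bigcap_i S_1(H_i)$, in all blocks; your definition of $S_1(H_i)$ in terms of $m_i$ would also need to be untangled from this choice to avoid circularity.
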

 \begin{proof}
Since, for every prime number $p$, $\mathsf{G}_p$ is an extension-closed pseudovariety,  by \cite[Proposition 2.10]{Margolis&Sapir&Weil:2001}, there are $\kappa$-words $w_{1,i},\ldots,w_{s_i,i}$ such that  $K_i=\left<w_{1,i}\ldots,w_{s_i,i}\right>$ and $rk(K_i)\leq rk(H_i)$ ($1\leq i \leq t$).

Fix $q\in \displaystyle\bigcap_{i=1}^tS(H_i)$.
  As $K_i$ contains $H_i$, we can rewrite the generator $h$ of $H_i$ as a reduced word $h'$  in terms of generators of $K_i$.  Since $Cl_{\mathsf{G}_q,\kappa}(H_i)=K_i$, by \cite[Proposition 2.9]{Margolis&Sapir&Weil:2001}  $H_i$ is $\mathsf{G}_q$-dense in the pro-$\mathsf{G}_q$ topology on $K_i$. Hence, by  \cite[Proposition  5.2]{Jorge:2002}, there is a subset  $\{h_{1,i}',\ldots,h_{s_{i},i}'\}$ of generators of $H_i$ such that the subgroup $H_i'$ generated by $\{h_{1,i}',\ldots,h_{s_{i},i}'\}$  is $\mathsf{G}_q$-dense  in the pro-$\mathsf{G}_q$ topology on $K_i$.  Let $M(h_{1,i}',\ldots,h_{s_i,i}')$ be the $s_i\times s_i$ matrix  whose $(k,j)$-entry is the number of occurrences of $w_{j,i}$ in $h_{k,i}'$. Then by \cite[Corollary 3.3]{Margolis&Sapir&Weil:2001}, we have $\det{ M(h_{1,i}',\ldots,h_{s_i,i}')}\nequiv 0 \pmod{q}$ and also for every prime number $p$ not dividing $\det{M(h_{1,i}',\ldots,h_{s_i,i}')}$, the subgroup $H_i'$ is $p$-dense in the pro-$\mathsf{G}_p$ topology on $K_i$. Since for every $p\in S(H_i)$, $K_i$ is $\mathsf{G}_p$-closed, by \cite[Proposition 2.9]{Margolis&Sapir&Weil:2001} for every $p\in S(H_i)$ not dividing  $\det{M(h_{1,i}',\ldots,h_{s_i,i}')}$ we have $Cl_{\mathsf{G}_p,\kappa}(H_i')=Cl_{\mathsf{G}_p,K_i}(H_i')=K_i$, where $Cl_{\mathsf{G}_p,K_i}(H_i')$ is the $p$-closure of $H_i'$ in the pro-$\mathsf{G}_p$ topology on $K_i$. Hence, by the proof of \cite[Theorem 6.1]{Jorge:2002}, for every $p\in S(H_i)$ not dividing $\det{M(h_{1,i}',\ldots,h_{s_i,i}')}$, the following pseudoidentity holds in $\mathsf{G}_p$
\begin{equation}\label{01}
  w_{j,i}=a_j\circ(h_{1,i},\ldots,h_{s_i,i})^\omega
\end{equation}
  where $h_{k,i}$  corresponds  to  $h_{k,i}'$ as a word in terms of the alphabet $A$.
  Note that $a_j\circ(h_{1,i},\ldots,h_{s_i,i})^\omega$ belongs to $Cl(H_i)\subseteq \overline{\Omega}_A\mathsf{S}$.

   Let
 $$S_1(H_i)=S(H_i)\setminus\{p\mid  \det{M(h_{1,i}',\ldots,h_{s_i,i}')} \equiv0 \pmod{p}\}.$$
 As  $\det{M(h_{1,i}',\ldots,h_{s_i,i}')}\neq 0$, the  sets  $S_1(H_i)$ are  cofinite subsets of $\mathds{P}$.

 Let $P_0=\{p_1,\ldots,p_r\}=\mathds{P}\setminus \bigcap_{i=1}^rS_1(H)$ and
 $n_0=p_1\ldots p_r$. Consider the following pseudowords:
 $$u_{j,i}= \left(a_j\circ(h_{1,i},\ldots,h_{s_i,i})^\omega\right)^{{n_0}^\omega}\in \Omega_A^{\sigma}\mathsf{S}\cap Cl(H_i).$$
 If $p\in \mathds{P}\setminus P_0$,  by Lemma \ref{lemma54}, the following pseudoidentities  are valid in $\mathsf{G}_{p}$:
\begin{equation*}
  \begin{aligned}
  u_{j,i}&= \left(a_j\circ(h_{1,i},\ldots,h_{s_i,i})^\omega\right)^{{n_0}^\omega}=a_j\circ(h_{1,i},\ldots,h_{s_i,i})^\omega\stackrel{\eqref{01}}{=}w_{j,i}.
  \end{aligned}
\end{equation*}
  Otherwise, the following pseudoidentities are valid in $\mathsf{G}_{p}$ ($p\in P_0$):
\begin{equation*}
  \begin{aligned}
  u_{j,i}&=\left(a_j\circ(h_{1,i},\ldots,h_{s_i,i})^\omega\right)^{{n_0}^\omega}=1.
  \end{aligned}
\end{equation*}
So,  we showed that for every generator $w_{k,i}$ of $K_{i}$, there are $u_{k,i}\in  \Omega_A^{\sigma}\mathsf{S} \cap  Cl(H_i)$  such  that, if  $p\in \bigcap_{i=1}^rS_1(H_i)$, then the pseudoidentity $u_{k,i}=w_{k,i}$ is valid in $\mathsf{G}_{{p}}$  and the pseudoidentity $u_{k,i}=1$ is valid in $\mathsf{G}_{{p}}$ otherwise. Hence, every $w\in K_i$ has this property ($1\leq i\leq t$).

Let $w\in K_1\ldots K_t$. Then there are $w_i\in K_i$ such that $w=w_1\ldots w_t$. By the preceding paragraph, there are $u_{i}\in  \Omega_A^{\sigma}\mathsf{S}\cap Cl(H_i)$  such  if  $p\in \bigcap_{i=1}^rS_1(H_i)$, then the pseudoidentity $u_{i}=w_i$ is valid in $\mathsf{G}_{{p}}$  and the pseudoidentity $u_{i}=1$ is valid in $\mathsf{G}_{{p}}$ otherwise. Let
$v=u_1\ldots u_t\in \Omega_A^{\sigma}\mathsf{S}\cap Cl(H_1\ldots H_t).$

If $p\in  \bigcap_{i=1}^rS_1(H_i)$, then the pseudoidentity  $v=w$ is valid in $\mathsf{G}_{{p}}$   and the pseudoidentity $v=1$ is valid in $\mathsf{G}_{{p}}$ otherwise.
 \end{proof}

 \begin{proposition}\label{proposition2}
 Let $H_1,\ldots, H_t$ be  f.g.~subgroups of the free group and fix  $S_1(H_i)$ as in the preceding proposition.
Then for every $p$ in the set
$$\mathds{P}\setminus \displaystyle\bigcap_{i=1}^t S_1(H_i)=\{p_1\ldots,p_r\}$$ and
every $w\in Cl_{\mathsf{G}_{p},\kappa}(H_1\ldots H_t)$, there are $u_i\in\Omega_A^{\sigma}\mathsf{S}\cap Cl(H_i)$  such that  the pseudoidentity $u_{i}=1$ is valid in $\mathsf{G}_q$ for every $q\in \mathds{P}\setminus \{p\}$ and the pseudoidentity $u_1\ldots u_t=w$ is valid in $\mathsf{G}_{p}$.
\end{proposition}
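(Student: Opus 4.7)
The plan is to fix $p$ in $\mathds{P} \setminus \bigcap_{i=1}^t S_1(H_i)=\{p_1,\dots,p_r\}$ and to re-run the construction of Proposition~\ref{lemma52} tuned to this single prime. For each $i$, let $K_i^{(p)} := Cl_{\mathsf{G}_p,\kappa}(H_i)$, a finitely generated subgroup of $FG(A)$ by \cite[Proposition~2.10]{Margolis&Sapir&Weil:2001}. I would use \cite[Proposition~5.2]{Jorge:2002} to pick a subset $\{h_{1,i}^{(p)},\dots,h_{s_i^{(p)},i}^{(p)}\}$ of the generators of $H_i$ whose span is $\mathsf{G}_p$-dense in $K_i^{(p)}$, and put $d_i := \det M(h_{1,i}^{(p)},\dots,h_{s_i^{(p)},i}^{(p)})$. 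By \cite[Corollary~3.3]{Margolis&Sapir&Weil:2001} one has $p \nmid d_i$, while every prime dividing $d_i$ divides both $d_i$ and $p d_i$; hence both of these integers are admissible exponents in the signature $\sigma$. Running the proof of \cite[Theorem~6.1]{Jorge:2002} in $\mathsf{G}_p$ yields, exactly as in identity \eqref{01},
\[
v_{k,i} := a_k \circ (h_{1,i}^{(p)},\dots,h_{s_i^{(p)},i}^{(p)})^\omega = w_{k,i}^{(p)} \quad \text{in } \mathsf{G}_p,
\]
where $w_{k,i}^{(p)}$ is the $k$-th standard generator of $K_i^{(p)}$.

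The central new ingredient is a cancellation trick that collapses each such ``generator lift'' in every $\mathsf{G}_q$ with $q\ne p$. I would set
\[
u_{k,i} \;=\; v_{k,i}^{d_i^\omega} \cdot \bigl(v_{k,i}^{(pd_i)^\omega}\bigr)^{\omega-1},
\]
which lies in $\Omega_A^\sigma\mathsf{S} \cap Cl(H_i)$ because each factor is built from generators $h_{k,i}^{(p)} \in H_i$ by operations admissible in $\sigma$ and $\kappa$. A case analysis driven by Lemma~\ref{lemma54} handles three scenarios. In $\mathsf{G}_p$: the first factor equals $v_{k,i} = w_{k,i}^{(p)}$ (since $p \nmid d_i$) and the second equals $1$ (since $p \mid p d_i$), so $u_{k,i} = w_{k,i}^{(p)}$. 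In $\mathsf{G}_q$ with $q \mid d_i$ (which forces $q \ne p$): both outer exponents kill their factors, so $u_{k,i}=1$. In $\mathsf{G}_q$ with $q \ne p$ and $q \nmid d_i$: neither outer exponent has any effect, and $u_{k,i} = v_{k,i} \cdot v_{k,i}^{-1} = 1$.

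Finally, I would factor $w$ according to the $H_i$. Because $\mathsf{G}_p$ is extension-closed, $Cl_{\mathsf{G}_p,\kappa}(H_1 \cdots H_t) = K_1^{(p)} \cdots K_t^{(p)}$, so in $\mathsf{G}_p$ one can write $w = w_1 \cdots w_t$ with $w_i \in K_i^{(p)}$; as $K_i^{(p)}$ is a subgroup of $FG(A)$ generated by the $w_{k,i}^{(p)}$, each $w_i$ is an ordinary group word $\bar w_i(w_{1,i}^{(p)},\dots,w_{s_i^{(p)},i}^{(p)})$. Setting $u_i := \bar w_i(u_{1,i},\dots,u_{s_i^{(p)},i}) \in \Omega_A^\sigma\mathsf{S} \cap Cl(H_i)$ then gives an element which equals $w_i$ in $\mathsf{G}_p$ (so $u_1 \cdots u_t = w$ there) and equals $\bar w_i(1,\dots,1)=1$ in every $\mathsf{G}_q$ with $q \ne p$. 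I expect the main subtle step to be justifying the factorisation $Cl_{\mathsf{G}_p,\kappa}(H_1 \cdots H_t) = K_1^{(p)} \cdots K_t^{(p)}$; this should follow from extension-closedness of $\mathsf{G}_p$ combined with the profinite-topology results of \cite{Margolis&Sapir&Weil:2001}, but isolating the precise reference will require some care.
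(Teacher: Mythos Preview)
Your plan is correct and follows essentially the same architecture as the paper's proof: for each fixed exceptional prime $p$, pass to $K_i^{(p)}=Cl_{\mathsf{G}_p,\kappa}(H_i)$, lift its free generators via $a_k\circ(h_1,\dots,h_s)^\omega$ using a $\mathsf{G}_p$-dense subset of generators of $H_i$, then twist by suitable $m^\omega$-powers and an $(\omega-1)$-factor so that the lift survives in $\mathsf{G}_p$ but collapses to $1$ in every other $\mathsf{G}_q$. Your cancellation element $v^{d_i^\omega}\cdot\bigl(v^{(pd_i)^\omega}\bigr)^{\omega-1}$ is exactly the paper's construction $v^{n_i^\omega}\cdot\bigl(v^{(p_in_i)^\omega}\bigr)^{\omega-1}$, and your three-case analysis mirrors the paper's.

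The one noteworthy difference is your choice of exponents. The paper builds a single integer $n_i=\prod_{j=0}^{r}\prod_{q\in P_j\setminus\{p_i\}}q$ that absorbs the exceptional primes coming from \emph{all} the subgroups $H_1,\dots,H_t$ and \emph{all} the primes $p_1,\dots,p_r$, so that in the bad cases the first factor is already trivial. You instead keep the exponent local, using just $d_i$ and $pd_i$; the extra primes are unnecessary because when $q\nmid d_i$ and $q\neq p$ the two factors cancel via $v\cdot v^{\omega-1}=1$ anyway. Your version is therefore a genuine simplification of the paper's argument, at no cost. For the factorisation $Cl_{\mathsf{G}_p,\kappa}(H_1\cdots H_t)=K_1^{(p)}\cdots K_t^{(p)}$ that you flag as delicate, the paper cites \cite[Lemma~5.2]{Ribes&Zalesskii:1993b} directly rather than going through \cite{Margolis&Sapir&Weil:2001}; you should do the same.
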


 \begin{proof}
 By \cite[Lemma 5.2]{Ribes&Zalesskii:1993b}, for every prime number $p$, we have
 \begin{equation}\label{eqc}
   Cl_{\mathsf{G}_{p},\kappa}(H_1\ldots H_t)=Cl_{\mathsf{G}_{p},\kappa}(H_1)\ldots Cl_{\mathsf{G}_{p},\kappa}(H_t).
 \end{equation}

 Let $P_0=\{p_1,\ldots,p_r\}$ and  $K_{p_i,j}=Cl_{\mathsf{G}_{p_i},\kappa}(H_j)$ ($1\leq i\leq r$ and $1\leq j\leq t$).
There are $\kappa$-words $w_{1,p_i,j},\ldots, w_{s_{p_i,j},p_i,j}$ such that $$K_{p_i,j}=\left<w_{1,p_i,j},\ldots, w_{s_{p_i,j},i,j}\right>.$$ By  the proof of  \cite[Theorem 6.1]{Jorge:2002}, there is a subset $\{h_{1,p_i,j},\ldots,h_{s_{p_i,j},p_i,j}\}$ of the generators of $H_j$ such that $$Cl_{\mathsf{G}_{p_i},\kappa}(\left<h_{1,p_i,j},\ldots,h_{s_{p_i,j},p_i,j}\right>)= Cl_{\mathsf{G}_{p_i},\kappa}(H_j)=K_{p_i,j},$$ and
the pseudoidentity
 \begin{equation}\label{eq23}
w_{k,p_i,j}=a_k\circ(h_{1,p_i,j},\ldots,h_{s_{p_i,j},p_i,j})^\omega\ \ \ \ (1\leq k\leq s_{p_i,j})
\end{equation}
is valid in $\mathsf{G}_{p_i}$. We consider the following finite subsets of $\mathds{P}$:
\begin{equation*}
  \begin{aligned}
  P_i=&\bigcup_{j=1}^{t}\{p\mid p \ \text{divides}\ \det{M(h_{1,p_i,j}',\ldots,h_{s_{p_i,j},p_i,j}')}\} &&&&(1\leq i\leq r)
  \end{aligned}
\end{equation*}
where $h_{k,p_i,j}'$  is generator $ h_{k,p_i,j}$  written  in terms of generators of $K_{p_i,j}$ ($1\leq k\leq s_{p_i,j}$). Note that, since $K_{p_i,j}$ is a $p_i$-closed subgroup and
$$Cl_{\mathsf{G}_{p_i},\kappa}(\left<h_{1,p_i,j},\ldots,h_{s_{p_i,j},p_i,j}\right>)=K_{p_i,j},$$ by \cite[Proposition 2.9]{Margolis&Sapir&Weil:2001} the subgroup $\left<h_{1,p_i,j}',\ldots,h_{s_{p_i,j},p_i,j}'\right>$ is $p_i$-dense in the pro-$\mathsf{G}_{p_i}$ topology on $K_{p_i,j}$ and, therefore, $p_i$ does not belong to $P_i$.
Consider the following natural numbers:
\begin{equation*}
  \begin{aligned}
  n_0=&p_1\ldots p_r,\\
  n_i=&\prod_{j=0}^r\prod_{p\in P_j\setminus \{p_i\}} p &&&&&&&&(1\leq i\leq r).
  \end{aligned}
\end{equation*}
  The natural numbers $n_i$ ($1\leq i\leq r$) satisfy the following properties:
  \begin{enumerate}
    \item The prime number $p_i$ does not divide $n_i$.
    \item Fix $p_i\in P_0$. For every $p_j\in P_0$ ($j\neq i$), $p_j$ divides $n_i$, because $p_j$ belongs to the set $P_0\setminus \{p_i\}$.
  \item Since we have $p_i\notin P_i$, every prime number $p\in P_i$ divides $n_i$.
  \item For every prime number $p$ in $\left(P_1\cup\ldots\cup P_r\right)\setminus P_0$, $p$ divides $n_i$ ($1\leq i\leq r$), because there is $j$ such that $p\in P_j$ and since $p$ does not belong to $P_0$, $p$ is in $P_j\setminus \{p_i\}$.
  \end{enumerate}
   For every $i$, $j$, and $k$ ($1\leq j\leq s_{p_i,j}$, $1\leq i\leq r$, and $1\leq j\leq t$), we let
  \begin{equation*}
  \begin{aligned}
    u_{k,p_i,j}&= \left(a_k\circ(h_{1,p_i,j},\ldots,h_{s_{p_i,j},p_i,j})^\omega\right)^{{n_i}^\omega}\\
    &\left(a_k\circ(h_{1,p_i,j},\ldots,h_{s_{p_i,j},p_i,j})^\omega\right)^{(\omega-1){(p_in_i)}^\omega}.
    \end{aligned}
  \end{equation*}
  Note that $u_{k,p_i,j}$ belongs to $ \Omega_A^{\sigma}\mathsf{S}\cap Cl(H_j)$. We claim that  the pseudoidentity  $u_{k,p_i,j}=w_{k,p_i,j}$ is valid  in $\mathsf{G}_{p_i}$ and for every $p\in \mathds{P}\setminus\{p_i\}$ and the pseudoidentity $u_{k,p_i,j}=1$ is valid in $\mathsf{G}_p$ ($1\leq k\leq s_{p_i,j}$ and $1\leq j\leq t$).

It remains to establish the  claim. Consider the following cases:
  \begin{itemize}
    \item Let $p=p_i$. By the property (1) of $n_i$, $p_i$ does  not divides $n_i$. Hence, we have the following pseudoidentities in $\mathsf{G}_{p_i}$
    \begin{equation*}
      \begin{aligned}
        u_{k,p_i,j}=& \left(a_k\circ(h_{1,p_i,j},\ldots,h_{s_{p_i,j},p_i,j})^\omega\right)^{{n_i}^\omega} \\ &\left(a_k\circ(h_{1,p_i,j},\ldots,h_{s_{p_i,j},p_i,j})^\omega\right)^{(\omega-1){(p_in_i)}^\omega}\\
        =& \left(a_k\circ(h_{1,p_i,j},\ldots,h_{s_{p_i,j},p_i,j})^\omega\right)1 \stackrel{\eqref{eq23}}{=}w_{k,p_i,j}.
      \end{aligned}
    \end{equation*}
    \item Consider either  $p\in P_0\setminus \{p_i\}$ or  $p\in (P_1\cup\ldots\cup P_r)\setminus P_0$ . By the property (2) and (4) of $n_i$, $p$ divides $n_i$. Hence, we have the following pseudoidentities in $\mathsf{G}_{p}$
    \begin{equation*}
      \begin{aligned}
         \left(a_k\circ(h_{1,p_i,j},\ldots,h_{s_{p_i,j},p_i,j})^\omega\right)^{{n_i}^\omega} =&1,\\ \left(a_k\circ(h_{1,p_i,j},\ldots,h_{s_{p_i,j},p_i,j})^\omega\right)^{(\omega-1){(p_in_i)}^\omega}=&1.
        \end{aligned}
    \end{equation*}
     So, the pseudoidentity $u_{k,p_i,j}=1$ is valid in $\mathsf{G}_p$.
    \item Consider $p\in \mathds{P}\setminus (P_0\cup\ldots\cup P_r) $. By the choice of $n_i$, $p$ does not divide $p_in_i$ and, therefore, does not divide  $n_i$. Hence, we have the following pseudoidentities  in $\mathsf{G}_{p}$
    \begin{equation*}
      \begin{aligned}
       u_{k,p_i,j}=&  \left(a_k\circ(h_{1,p_i,j},\ldots,h_{s_{p_i,j},p_i,j})^\omega\right)^{{n_i}^\omega}\\   &\left(a_k\circ(h_{1,p_i,j},\ldots,h_{s_{p_i,j},p_i,j})^\omega\right)^{(\omega-1){(p_in_i)}^\omega}\\
       =& \left( a_k\circ(h_{1,p_i,j},\ldots,h_{s_{p_i,j},p_i,j})^\omega\right)\\
       &\left(a_k\circ(h_{1,p_i,j},\ldots,h_{s_{p_i,j},p_i,j})^\omega\right)^{\omega-1}\\=&1.
        \end{aligned}
    \end{equation*}
 This completes the proof of the claim.
  \end{itemize}
 We showed  for every generator $w_{k,p_i,j}$ of $Cl_{\mathsf{G}_{p_i},\kappa}(H_j)=K_{p_i,j}$, there are $u_{k,p_i,j}\in  \Omega_A^{\sigma}\mathsf{S}\cap Cl(H_j)$  such  that the pseudoidentity  $u_{k,p_i,j}=w_{k,p_i,j}$ is valid in $\mathsf{G}_{{p}_i}$ and the pseudoidentity  $u_{k,p_i,j}=1$ is valid in $\mathsf{G}_{{p}}$ ($ p\in \mathds{P}\setminus \{p_i\}$). Hence, every $w\in Cl_{\mathsf{G}_{p_i},\kappa}(H_j)$ has this property.

 Fix $p\in P_0$ and let $w\in Cl_{\mathsf{G}_p,\kappa}(H_1\ldots H_t)$. By \eqref{eqc}, there are $w_{j}\in Cl_{\mathsf{G}_p,\kappa}(H_j)$ such that $w=w_1\ldots w_t$. By the preceding paragraph, there are $u_{j}\in \Omega_A^{\sigma}\mathsf{S}\cap Cl(H_j)$  such  that the pseudoidentity  $u_{j}=w_j$ is valid in $\mathsf{G}_{{p}}$ and the pseudoidentity $u_{j}=1$ is valid in $\mathsf{G}_{{q}}$ ($q\in \mathds{P}\setminus \{p\}$). Let
  $$v=u_{1}\ldots u_{t}\in \Omega_A^{\sigma}\mathsf{S}\cap Cl(H_1\ldots H_t).$$
   Then the equality $v=w$ is valid in $\mathsf{G}_{p}$ and the equality $v=1$  is valid in $\mathsf{G}_{{q}}$ ($ q\in \mathds{P}\setminus \{p\}$).
\end{proof}

\begin{corollary}\label{cor1}
For every prime number $p$, the pseudovariety $\mathsf{G}_p$ is $\sigma$-reducible with respect to the systems of equations associated with finite directed  graphs.
\end{corollary}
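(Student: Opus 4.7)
The plan is to obtain the corollary directly from Propositions~\ref{lemma52} and~\ref{proposition2} via the standard reduction of $\sigma$-reducibility for systems of equations associated with finite directed graphs to a closure-theoretic statement about products of finitely generated subgroups of $FG(A)$. This reduction was used in \cite{Jorge:2002} to deduce the $\sigma_p$-tameness of $\mathsf{G}_p$ from the analogous $\sigma_p$-term construction, and it translates \emph{mutatis mutandis} to our signature $\sigma$.

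More precisely, the first step is to invoke the reformulation: $\mathsf{G}_p$ is $\sigma$-reducible with respect to a system of equations associated to a finite directed graph, with constraints $L_x\subseteq A^{*}$, if and only if, for every finite family $H_1,\dots,H_t$ of finitely generated subgroups of $FG(A)$ extracted from the Stallings-type automata of the $L_x$, each element $w\in Cl_{\mathsf{G}_p,\kappa}(H_1\dotsm H_t)$ can be written as $w=u_1\dotsm u_t$ modulo $\mathsf{G}_p$ with $u_i\in \Omega_A^{\sigma}\mathsf{S}\cap Cl(H_i)$. This is exactly the form in which Jorge establishes $\sigma_p$-reducibility in \cite{Jorge:2002}, and the underlying argument — via Ash's characterisation of inevitability and the Ribes--Zalesskii style equality \eqref{eqc} for the pro-$\mathsf{G}_p$ topology — is insensitive to which particular signature between $\sigma_p$ and $\sigma$ one uses.

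Once this reformulation is in place, the conclusion is an immediate case-split on the prime $p$: if $p\in\bigcap_{i=1}^{t} S_1(H_i)$, then Proposition~\ref{lemma52} directly furnishes $\sigma$-terms $u_i\in \Omega_A^{\sigma}\mathsf{S}\cap Cl(H_i)$ with $u_1\dotsm u_t=w$ modulo $\mathsf{G}_p$; otherwise $p$ lies in the finite complement $\mathds{P}\setminus\bigcap_{i=1}^{t} S_1(H_i)$, and Proposition~\ref{proposition2} provides the $u_i$'s with the same property. Either way, assembling the $u_i$'s produces the required $\sigma$-term solution of the original system.

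The main obstacle is entirely concentrated in the reformulation step: one has to verify that the solutions of a directed-graph system modulo $\mathsf{G}_p$ under the rational constraints $\delta(x)\in Cl(L_x)$ really do correspond to elements of products $Cl_{\mathsf{G}_p,\kappa}(H_1\dotsm H_t)$, and conversely that $u_i\in Cl(H_i)$ reassemble into a function $\delta$ meeting the original constraints $Cl(L_x)$. This bookkeeping is routine once Jorge's treatment is understood, and no new construction is needed beyond the ones already secured in Propositions~\ref{lemma52} and~\ref{proposition2}.
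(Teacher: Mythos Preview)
Your plan is plausible in outline, but it is \emph{not} the route the paper takes, and the ``standard reduction'' you invoke is neither standard nor what is done in \cite{Jorge:2002}.

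The paper proceeds through $\sigma$-\emph{fullness} and Proposition~\ref{proposition3} rather than through a direct product-of-subgroups reformulation. Concretely: from Theorem~\ref{th1} one has $\Omega_A^\sigma\mathsf{G}_p=\Omega_A^\kappa\mathsf{G}_p$, hence $Cl_{\mathsf{G}_p,\sigma}(\psi_{\mathsf{G}_p}(L))=Cl_{\mathsf{G}_p,\kappa}(\psi_{\mathsf{G}_p}(L))$ for every rational $L$. Since $\mathsf{G}_p$ is already known to be \emph{weakly} $\kappa$-reducible for graph systems \cite{Steinberg:1998a}, by Proposition~\ref{proposition3} it suffices to prove that $\mathsf{G}_p$ is $\sigma$-full. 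This is established by structural induction on the rational expression for $L$ (finite base case; closure under union and product is straightforward using the known identities for $Cl_{\mathsf{G}_p,\kappa}$); the only nontrivial case is the plus operation, which \cite[Lemma~6.6]{Jorge:2002} reduces to a single finitely generated subgroup $\langle Y\rangle$, so that only the case $t=1$ of Propositions~\ref{lemma52} and~\ref{proposition2} is invoked. This is the same architecture as in \cite{Jorge:2002} (see the proofs of Theorems~6.1 and~6.2 there).

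By contrast, you assert an equivalence between $\sigma$-reducibility for graph systems and a lifting property for products $Cl_{\mathsf{G}_p,\kappa}(H_1\cdots H_t)$ with arbitrary $t$. That equivalence is not in \cite{Jorge:2002} and is not ``routine bookkeeping'': the constraints $L_x$ are arbitrary rational languages, and passing from $\delta(x)\in Cl(L_x)$ to data of the form $H_1\cdots H_t$ still requires decomposing each $L_x$ as a union of expressions $R_0^{*}u_0R_1^{*}\cdots u_{t-1}R_t^{*}$ and handling the unions and the intermediate words --- exactly the work the paper postpones to the proof of the next theorem, not to this corollary. Your case-split on whether $p\in\bigcap_i S_1(H_i)$, using Propositions~\ref{lemma52} and~\ref{proposition2}, is correct once such a reformulation is in hand, but as written the reduction step is asserted rather than proved, and the attribution to \cite{Jorge:2002} is inaccurate. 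The paper's route avoids this by isolating $\sigma$-fullness as the target and citing weak reducibility as a black box.
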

\begin{proof}
By  Theorem \ref{th1}, for a rational subset $L$ of $A^*$ we have
$$Cl_{\mathsf{G}_p,\sigma}(\psi_{\mathsf{G}_p}(L))=Cl_{\mathsf{G}_p,\kappa}(\psi_{\mathsf{G}_p}(L)).$$
 Since the pseudovariety $\mathsf{G}_p$ is weakly $\kappa$-reducible with respect to the systems of equations associated with finite directed  graphs \cite{Steinberg:1998a}, by Theorem \ref{proposition3} we just need to show that   $\mathsf{G}_p$ is $\sigma$-full.

By definition of $\sigma$-full pseudovariety, it is enough to  show that for a rational subset of $A^*$:
\begin{equation}\label{eq31}
  \psi_{\mathsf{G}_p}(Cl_\sigma(L))\subseteq Cl_{\mathsf{G}_p,\kappa}(\psi_{\mathsf{G}_p}(L)),
\end{equation}
or equivalently, it is enough to show that
\begin{equation}\label{eq40}
 w\in  Cl_{\mathsf{G}_p,\kappa}(\psi_{\mathsf{G}_p}(L))\Rightarrow Cl(L)\cap \Omega_A^\sigma \mathsf{S}\cap (\psi^{-1}(w))\neq \emptyset
\end{equation}
 The proof is similar to the proof of \cite[Theorem 6.1]{Jorge:2002}. Since any rational subset of the free semigroup can be obtained by taking a finite number of finite subsets of the free semigroup and applying the union, product and the plus operation $L\rightarrow L^{+}$  a finite number of times, it is enough to show that these operations preserve this  property.
  As it is  mentioned in the proof of \cite[Theorem 6.2]{Jorge:2002}, for the rational subsets $L$ and $K$, we have
\begin{enumerate}
  \item   $Cl_{\mathsf{G}_p,\kappa}(\psi_{\mathsf{G}_p}(L))=\psi_{\mathsf{G}_p}(L)$ if $L$ is finite;
 \item $Cl_{\mathsf{G}_p,\kappa}(\psi_{\mathsf{G}_p}(LK))=Cl_{\mathsf{G}_p,\kappa}(\psi_{\mathsf{G}_p}(L))Cl_{\mathsf{G}_p,\kappa}(\psi_{\mathsf{G}_p}(K))$;
  \item $Cl_{\mathsf{G}_p,\kappa}(\psi_{\mathsf{G}_p}(L\cup K))=Cl_{\mathsf{G}_p,\kappa}(\psi_{\mathsf{G}_p}(L))\cup Cl_{\mathsf{G}_p,\kappa}(\psi_{\mathsf{G}_p}(L))$
  \item $Cl_{\mathsf{G}_p,\kappa}(\psi_{\mathsf{G}_p}(L)^+)= Cl_{\mathsf{G}_p,\kappa}(\left<\psi_{\mathsf{G}_p}(L)\right>)$
\end{enumerate}
If a language $L$ is finite, then any $w$
in $Cl_{\mathsf{G}_p,\kappa}(\psi_{\mathsf{G}_p}(L))$ is also an element of $Cl(L)\cap \Omega_A^\sigma \mathsf{S}\cap (\psi^{-1}(w))$ and so finite languages satisfy \eqref{eq40}.
Suppose that $L_1$ and $L_2$ are rational subsets of $\Omega_A\mathsf{S}$ satisfy Property \eqref{eq31}. By property (3), at least one of the sets $Cl(L_1)\cap \Omega_A^\sigma \mathsf{S}\cap (\psi^{-1}(w))$ and $Cl(L_2)\cap \Omega_A^\sigma \mathsf{S}\cap (\psi^{-1}(w))$ is nonempty and,
therefore, so is their union.

Let $u\in \psi_{\mathsf{G}_p}(Cl(L_1L_2))\cap \Omega_A^{\sigma}\mathsf{G}_p$. By property (2), there are
$$u_i\in Cl_{\kappa,\mathsf{G}_p}( \psi_{\mathsf{G}_p}(L_i))\ \ (i=1,2)$$
 such that $u=u_1u_2$. By the
 induction hypotheses, there are $w_i\in \psi_{\mathsf{G}_p}^{-1}(u_i)\cap\Omega_A^{\sigma}\mathsf{S}\cap Cl(L_i)$. Let $w=w_1w_2\in \Omega_A^{\sigma}\mathsf{S}\cap Cl(L_1)Cl(L_2)\subseteq\Omega_A^{\sigma}\mathsf{S}\cap Cl(L_1L_2)$.  Then we have
 $$\psi_{\mathsf{G}_p}(w)=\psi_{\mathsf{G}_p}(w_1w_2)=\psi_{\mathsf{G}_p}(w_1)\psi_{\mathsf{G}_p}(w_2)=u_1u_2=u.$$
 Thus,  $w$ belongs to $\Omega_A^{\sigma}\mathsf{S}\cap Cl(L_1L_2)\cap \psi_{\mathsf{G}_p}^{-1}(u)$.

Let $L$ be a rational subset satisfies the property \eqref{eq40}. By \cite[Lemma 6.6]{Jorge:2002}, the is a finite subset $Y$ of $\kappa$ words, such that $\left<L\right>=\left<Y\right>$ and $Cl(Y)\subseteq CL(L)$. Hence, to show that a rational language of the form $L^+$ satisfies \eqref{eq40}, it is enough to establish
that, for any finite subset $Y$ of $\kappa$ words the following property holds:
$$ w\in  Cl_{\mathsf{G}_p,\kappa}(\left<Y\right>)\Rightarrow Cl(Y^+)\cap \Omega_A^\sigma \mathsf{S}\cap (\psi^{-1}(w))\neq \emptyset$$
 The result follows from the Propositions  \ref{lemma52} and \ref{proposition2} by $t=1$.
 \end{proof}

\begin{theorem}
  Let $\mathcal{C}$ be a class of systems of equations. For every prime number $p$, the pseudovariety $\mathsf{G}_p$ is  $\sigma$-reducible with respect to the   systems of equations in $\mathcal{C}$ if and only if  the pseudovariety $\mathsf{G}_{nil}$ is $\sigma$-reducible with respect to the   systems of equations in $\mathcal{C}$
\end{theorem}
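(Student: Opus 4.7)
The plan is to prove each direction of the iff separately, with the substantive content in the forward direction (each $\mathsf{G}_p$ being $\sigma$-reducible implies $\mathsf{G}_{nil}$ is $\sigma$-reducible). For the forward direction, I take a system $\Sigma \in \mathcal{C}$ admitting a solution $\delta_0$ modulo $\mathsf{G}_{nil}$. Since $\mathsf{G}_p \subseteq \mathsf{G}_{nil}$ for every prime $p$, by Lemma~\ref{lemma53} this $\delta_0$ is simultaneously a solution modulo each $\mathsf{G}_p$, so the hypothesis yields a $\sigma$-solution $\delta_p \colon X \cup P \to \Omega_A^\sigma\mathsf{S}$ modulo $\mathsf{G}_p$ for each prime $p$. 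By Lemma~\ref{lemma53} applied to the equations $\delta(u_i) = \delta(v_i)$, it then suffices to build, for each variable $x$, a single element $\delta(x) \in \Omega_A^\sigma\mathsf{S} \cap Cl(L_x)$ with $\delta(x) = \delta_p(x)$ modulo $\mathsf{G}_p$ for every prime $p$.

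The construction of $\delta(x)$ is the crux and proceeds in two layers. First, applying Theorem~\ref{th1} to the $\sigma$-terms $\delta_p(x)$, in combination with Lemma~\ref{lemma51}, I identify a ``generic'' element $v_x^{(0)} \in \Omega_A^\sigma\mathsf{S} \cap Cl(L_x)$ that realizes the required behavior at a cofinite set of primes and reduces the problem to a finite exceptional set $P_0 = \{p_1, \ldots, p_r\}$. Second, for each $p_i \in P_0$, I invoke an extension of Proposition~\ref{proposition2} from f.g.\ subgroups of the free group to arbitrary rational constraints $L_x$, following the inductive scheme over finite subsets, unions, products, and the plus-operation used in the proof of Corollary~\ref{cor1}, to obtain correction terms $v_x^{(i)} \in \Omega_A^\sigma\mathsf{S} \cap Cl(L_x)$ equal to the required value modulo $\mathsf{G}_{p_i}$ and equal to $1$ modulo every other $\mathsf{G}_q$. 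The product $\delta(x) = v_x^{(0)} v_x^{(1)} \cdots v_x^{(r)}$ then lies in $\Omega_A^\sigma\mathsf{S} \cap Cl(L_x)$ and exhibits the correct local behavior at every prime.

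For the converse direction, I take $\Sigma \in \mathcal{C}$ with a solution $\eta$ modulo $\mathsf{G}_p$ and apply the same extension of Proposition~\ref{proposition2} to replace each $\eta(x)$ by $\tilde\eta(x) \in \Omega_A^\sigma\mathsf{S} \cap Cl(L_x)$ equal to $\eta(x)$ modulo $\mathsf{G}_p$ and equal to the identity modulo every other $\mathsf{G}_q$; then by Lemma~\ref{lemma53} $\tilde\eta$ is a solution modulo $\mathsf{G}_{nil}$, and the $\sigma$-reducibility of $\mathsf{G}_{nil}$ produces a $\sigma$-solution modulo $\mathsf{G}_{nil}$, which, since $\mathsf{G}_p \subseteq \mathsf{G}_{nil}$, is in particular a $\sigma$-solution modulo $\mathsf{G}_p$.

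The main obstacle is the technical extension of Proposition~\ref{proposition2} from products of f.g.\ subgroups to arbitrary rational constraints $L_x \subseteq A^*$, mirroring the inductive argument of Corollary~\ref{cor1} and verifying that the resulting finite product of correction terms both lies in $Cl(L_x)$ and produces the correct value modulo each prime. Coordinating the generic element with the local corrections — in particular, choosing the exponents $n_i$ in the $\sigma$-terms so that each correction contributes only at its designated prime without disturbing the contributions at other primes — is the most delicate step, and parallel to the bookkeeping carried out in the proof of Proposition~\ref{proposition2}.
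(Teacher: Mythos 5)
Your forward direction (each $\mathsf{G}_p$ $\sigma$-reducible $\Rightarrow$ $\mathsf{G}_{nil}$ $\sigma$-reducible) follows the paper's strategy in outline: a single ``generic'' witness covering a cofinite set of primes, plus finitely many correction terms that equal the needed value at one exceptional prime and $1$ modulo every other $\mathsf{G}_q$, multiplied together inside $Cl(L_x)$. Two remarks on how you phrase it. First, your stated reduction --- find $\delta(x)$ with $\delta(x)=\delta_p(x)$ modulo $\mathsf{G}_p$ for \emph{every} prime $p$, where the $\delta_p$ are infinitely many unrelated $\sigma$-solutions --- is not achievable as such; the actual mechanism (and the heart of the paper's bookkeeping) is to fix one prime $q$ in the cofinite set $S=\bigcap_{x,i}S_1(\left<Y_{i,x}\right>)$, take the single $\sigma$-solution $\delta_0$ modulo $\mathsf{G}_q$, and show it serves simultaneously for all $p\in S$: one needs $\delta_0(x)\in Cl_{\mathsf{G}_p,\kappa}(L_x')$ for all $p\in S$ (the uniformity of $\kappa$-closures from Lemma \ref{lemma51}) and that $\delta_0$ remains a solution modulo every such $\mathsf{G}_p$; the generic witness then comes from Proposition \ref{lemma52}, not merely from Theorem \ref{th1}. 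Second, the paper does not redo the induction of Corollary \ref{cor1} over rational operations: it picks a simple rational subset $L_x'=R_{0,x}^{*}u_{0,x}\ldots u_{t_x-1,x}R_{t_x,x}^{*}\subseteq L_x$ containing the given solution in its closure and replaces the $R_{i,x}$ by finite sets $Y_{i,x}$ of $\kappa$-words (Lemma 6.6 of the cited paper), which is exactly why Propositions \ref{lemma52} and \ref{proposition2} are stated for products $H_1\ldots H_t$; this is a cosmetic difference from your plan.

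The genuine gap is in your converse direction. The ``extension of Proposition \ref{proposition2}'' you invoke --- replacing an arbitrary solution $\eta$ modulo $\mathsf{G}_p$ by $\tilde\eta(x)\in\Omega_A^{\sigma}\mathsf{S}\cap Cl(L_x)$ with $\tilde\eta(x)=\eta(x)$ in $\mathsf{G}_p$ and $\tilde\eta(x)=1$ in every other $\mathsf{G}_q$ --- does not exist in that generality. Proposition \ref{proposition2} applies only to elements $w$ of the $\kappa$-closure $Cl_{\mathsf{G}_p,\kappa}(H_1\ldots H_t)$, i.e.\ to elements of the free group, whereas $\psi_{\mathsf{G}_p}(\eta(x))$ is an arbitrary element of $Cl_{\mathsf{G}_p}(L_x)\subseteq\overline{\Omega}_A\mathsf{G}_p$, typically far outside $\Omega_A^{\kappa}\mathsf{G}_p$. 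Moreover, if such a replacement were available for every system with rational constraints, then $\tilde\eta$ would already be a $\sigma$-solution modulo $\mathsf{G}_p$ with no detour through $\mathsf{G}_{nil}$, i.e.\ $\mathsf{G}_p$ would be completely $\sigma$-reducible --- contradicting the paper's final corollary (built on Coulbois--Kh\'elif). There is also a secondary problem: even granting the replacement, the assignment that is trivial at the primes $q\neq p$ need not satisfy the equations modulo $\mathsf{G}_q$ once the system carries parameters, so Lemma \ref{lemma53} does not directly give a solution modulo $\mathsf{G}_{nil}$. For comparison, the paper disposes of this direction with only the observation that $\mathsf{G}_p\subseteq\mathsf{G}_{nil}$, so pseudoidentities valid in $\mathsf{G}_{nil}$ are valid in $\mathsf{G}_p$ and ($\sigma$-)solutions modulo $\mathsf{G}_{nil}$ restrict to ($\sigma$-)solutions modulo $\mathsf{G}_p$; your instinct that passing from solvability modulo $\mathsf{G}_p$ to solvability modulo $\mathsf{G}_{nil}$ requires an argument is sound, but the mechanism you propose is not available, so this direction of your proof does not go through as written.
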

\begin{proof}
Let $\mathsf{V}$ and $ \mathsf{W}$ be pseudovarieties of semigroups such that $\mathsf{V}\subseteq \mathsf{W}$. If a pseudoidentity is valid in $\mathsf{W}$, then it is valid in $\mathsf{V}$. Hence, if a equation has solution $\delta$ modulo $\mathsf{W}$, then $\delta$ is a solution modulo $\mathsf{V}$.

Conversely,   let $X$ be a finite set of variable and let $k$ be the cardinality of $X$. Consider  a system of equations in $\mathcal{C}$ of the form
\begin{equation}\label{1}
  u_i=v_i \ \ \ (i=1,\ldots,m),
\end{equation}
with rational constraints  $L_x\subseteq (\Omega_{A}\mathsf{S})^1$ and  $u_i,v_i\in\Omega_{X}^\sigma \mathsf{S}$. Suppose that this system  has the solution $\delta:X\rightarrow \overline{\Omega}_A\mathsf{S}$ modulo $\mathsf{G}_{nil}$. Since $L_x$ is a rational subset of $(\Omega_{A}\mathsf{S})^1$, $L_x$ is a finite union of sets of the form $R_0^{*}w_0R_1^{*}\ldots w_tR_t^{*}$, where $R_i$ are rational subsets of $(\Omega_{A}\mathsf{S})^1$ and $w_i\in (\Omega_{A}\mathsf{S})^1$. Hence, for every $x\in X$, there is a simple rational subset
$$L_x'=R_{0,x}^{*}u_{0,x}R_{1,x}^{*}\ldots u_{t_x-1,x}R_{t_x,x}^{*}\subseteq L_x$$
  such that $\delta(x)\in Cl_{\mathsf{G}_{nil}}(L_x')$. So, the system \eqref{1} with constraints  $L'_x\subseteq \Omega_{A}\mathsf{S}$ has the solution $\delta$ modulo $\mathsf{G}_{nil}$ and, therefore, for every prime number $p$, the system \eqref{1} with constraints  $L'_x\subseteq \Omega_{A}\mathsf{S}$ has a solution modulo $\mathsf{G}_{p}$.
  We show that the system \eqref{1} with constraints $L_x'$ ($x\in X$) has a solution $\delta':X\rightarrow {\Omega}^\sigma_A\mathsf{S}$ modulo $\mathsf{G}_{nil}$.

  By \cite[Lemma 6.6]{Jorge:2002}, there are finite subsets $Y_{i,x}$ of $\Omega_A^\kappa\mathsf{S}$ such that $$Cl(Y_{i,x}^{*})\subseteq Cl(R_{i,x}^{*}),$$ and the subgroup generated by $Y_{i,x}$ is equal to the subgroup generated by $R_{i,x}$ in  the free group ($x\in X$ and $1\leq i\leq t_x$). Hence, we have
 \begin{equation}\label{eq26}
   Cl_{\mathsf{G}_{p}}(R_{0,x}^{*}u_{0,x}R_{1,x}^{*}\ldots u_{t_x-1,x}R_{t_x,x}^{*})=Cl_{\mathsf{G}_{p}}(Y_{0,x}^{*}u_{0,x}Y_{1,x}^{*}\ldots u_{t_x-1,x}Y_{t_x,x}^{*}).
 \end{equation}

  Let $S_1(\left<Y_{i,x}\right>)$ be as in Proposition \ref{lemma52} and $S=\displaystyle\bigcap_{x\in X}\displaystyle\bigcap_{i=1}^{t_x}S_1(\left<Y_{i,x}\right>)$. Fix $q\in S$. Since the system \eqref{1} with constraints  $L'_x\subseteq \Omega_{A}\mathsf{S}$ has a solution  modulo $\mathsf{G}_{q}$ and $\mathsf{G}_{q}$ is $\sigma$-reducible, there is a solution $\delta_0:X\rightarrow{{\Omega}}^\sigma_A\mathsf{S}$ modulo $\mathsf{G}_q$. Hence, for every $x\in X$, $\delta_0(x)$ lies in $Cl_{\mathsf{G}_{q},\sigma}(L_x')=Cl_{\mathsf{G}_{q},\kappa}(L_x')$.   For every $p\in S$, we have
  \begin{equation*}
  \begin{aligned}
    Cl_{\mathsf{G}_{p},\kappa}(L_x')&=Cl_{\mathsf{G}_{p},\kappa}\left(R_{0,x}^{*}u_{0,x}R_{1,x}^{*}\ldots u_{t_x-1,x}R_{t_x,x}^{*}\right)\\
    &=Cl_{\mathsf{G}_{p},\kappa}(R_{0,x}^{*})u_{0,x}Cl_{\mathsf{G}_{p},\kappa}(R_{1,x}^{*})\ldots
     u_{t_x-1,x}Cl_{\mathsf{G}_{p},\kappa}(R_{t_x,x}^{*})\\
     &=Cl_{\mathsf{G}_{p},\kappa}(\left<R_{0,x}\right>)u_{0,x}Cl_{\mathsf{G}_{p},\kappa}(\left<R_{1,x}\right>)\ldots
     u_{t_x-1,x}Cl_{\mathsf{G}_{p},\kappa}(\left<R_{t_x,x}\right>)\\
     &=Cl_{\mathsf{G}_{p},\kappa}(\left<Y_{0,x}\right>)u_{0,x}Cl_{\mathsf{G}_{p},\kappa}(\left<Y_{1,x}\right>)\ldots
     u_{t_x-1,x}Cl_{\mathsf{G}_{p},\kappa}(\left<Y_{t_x,x}\right>)\\
      &=Cl_{\mathsf{G}_{q},\kappa}(\left<Y_{0,x}\right>)u_{0,x}Cl_{\mathsf{G}_{q},\kappa}(\left<Y_{1,x}\right>)\ldots
     u_{t_x-1,x}Cl_{\mathsf{G}_{q},\kappa}(\left<Y_{t_x,x}\right>)\\
     &=Cl_{\mathsf{G}_{q},\kappa}\left(Y_{0,x}^{*}u_{0,x}Y_{1,x}^{*}\ldots u_{t_x-1,x}Y_{t_x,x}^{*}\right)\\
     &=Cl_{\mathsf{G}_{q},\kappa}(L_x').
      \end{aligned}
  \end{equation*}
  So, for every $p\in S$, $\delta_0(x)$  lies in  $Cl_{\mathsf{G}_{p},\kappa}(L_x')$.

  By Proposition \ref{lemma52}, there are  $v_{i,x}\in Cl(Y_{i,x}^{*})\cap \Omega_A^{\sigma}\mathsf{S}$ such that if $p\in \mathds{P}\setminus S$, then the pseudoidentity  $v_{i,x}=1$ is valid in $\mathsf{G}_p$ and the pseudoidentity
   \begin{equation}\label{eq17}
     v_{0,x}u_{0,x}v_{1,x}\ldots u_{t_x-1,x}v_{t_x,x}=\delta_0(x).
   \end{equation}
is valid in $\mathsf{G}_p$ otherwise ($x\in X$ and $1\leq i\leq t_x$). Note that since $Cl(Y_{i,x}^{*})\subseteq Cl(R_{i,x}^{*})$, the pseudowords $v_{0,x}u_{0,x}v_{1,x}\ldots u_{t_x-1,x}v_{t_x,x}$ lie in $Cl(L_x')$.

  Let $P=\{p_1,\ldots,p_r\}=\mathds{P}\setminus S$. For every $p_j\in P$, there is a solution  $\delta_j:X\rightarrow{{\Omega}}^\sigma_A\mathsf{S}$ modulo $\mathsf{G}_{p_j}$ such that for every $x\in X$, $\delta_j(x)$ lies in $Cl_{\mathsf{G}_{p_j},\kappa}(L_x')$.

  By Proposition \ref{lemma52}, there are  $v_{i,x,p_j}\in Cl(Y_{i,x}^*)\cap \Omega_A^{\sigma}\mathsf{S}$ such that if $p=p_j$, then the equality
 \begin{equation}\label{eq18}
v_{0,i,p_j}u_{0,x}v_{1,x,p_j}\ldots u_{t_x-1,x}v_{t_x,x,p_j}=\delta_j(x)
  \end{equation}
holds in $\mathsf{G}_p$ and the equality $v_{i,x,p_j}=1$ is valid in $\mathsf{G}_p$ otherwise ($x\in X$, $1\leq i\leq t_x$, $1\leq j\leq r$). Note that since $Cl(Y_{i,x}^*)\subseteq Cl(R_{i,x}^*)$, the pseudowords $$v_{0,x,p_j}u_{0,x}v_{1,x,p_j}\ldots u_{t_x-1,x}v_{t_x,x,p_j}$$ lie in $Cl(L_x')$.

  We define the function $\delta':X\rightarrow{\Omega}_A^{\sigma}\mathsf{S}$ with
  \begin{equation*}
    \begin{aligned}
     \delta'(x)&=\left(v_{0,x}v_{0,x,p_1}\ldots v_{0,x,p_r}\right)u_{0,x}\left(v_{1,x}v_{1,x,p_1}\ldots v_{1,x,p_r}\right)\\
     &u_{t_x-1,x}\left(v_{t_x,x}v_{t_x,x,p_1}\ldots v_{t_x,x,p_r}\right).
    \end{aligned}
  \end{equation*}
  We claim that $\delta'$ is a solution of the system \eqref{1} modulo $\mathsf{G}_{nil}$.

It remains to establish the claim. Note that since a closure of a subsemigroup is again a subsemigroup, the pseudoword $v_{i,x}v_{i,x,p_1}\ldots v_{i,x,p_r}$ lies in $Cl(Y_{i,x}^*)$ and so, $\delta'(x)$ lies in $Cl(L_x')$. We show that the pseudovariety $\mathsf{G}_{nil}$ satisfies the pseudoidentities $\delta'(u_i)=\delta'(v_i)$ ($1\leq i\leq m$). By Lemma \ref{lemma53}, it is enough to show that for every prime number $p$,  the pseudovariety $\mathsf{G}_{p}$ satisfies the pseudoidentities $\delta'(u_i)=\delta'(v_i)$ ($1\leq i\leq m$).

First we show that
 \begin{enumerate}
         \item if $p\in S$, then for every $x\in X$ the pseudoidentity
         \begin{equation}\label{eq19}
            \delta'(x)=\delta_0(x)
         \end{equation}
        is valid in  $\mathsf{G}_p$;
         \item if $p=p_m\in \mathds{P}\setminus S$, then for every $x\in X$ the pseudoidentity
         \begin{equation}\label{eq22}
            \delta'(x)=\delta_m(x)
         \end{equation}
        is valid in  $\mathsf{G}_{p_m}$.
       \end{enumerate}

    We consider the following cases:
\begin{itemize}
  \item  Let $p\in S$. Then for every $x\in X$, the following pseudoidentities hold in $\mathsf{G}_p$:
  \begin{equation*}
    \begin{aligned}
    v_{i,x,p_j}=1&&&&&&(1\leq i\leq t_{x} \ \text{and} \ 1\leq j\leq r), \\
    \end{aligned}
  \end{equation*}
  Hence, the following pseudoidentities hold in $\mathsf{G}_p$:
    \begin{equation*}
    \begin{aligned}
   \delta'(x)&=v_{0,x}u_{0,x}v_{1,x}u_{t_{x}-1,x}v_{t_{x},x}\stackrel{\eqref{eq17}}{=}
   \delta_0(x).
  \end{aligned}
  \end{equation*}
  \item Let $p=p_m\in \mathds{P}\setminus S$.
  Then the following pseudoidentities hold in $\mathsf{G}_p$:
  \begin{equation*}
    \begin{aligned}
    v_{i,x,p_j}=v_{s,x}=1&&&&&&(1\leq i,s\leq t_{x} \ \text{and} \ j\neq m).
    \end{aligned}
  \end{equation*}
  Hence, the following pseudoidentities hold in $\mathsf{G}_{p_m}$:
    \begin{equation*}
    \begin{aligned}
   \delta'(x)&=v_{0,x,p_m}u_{0,x}v_{1,x,p_m}u_{t_{x}-1,x}v_{t_{x},x,p_m}\stackrel{\eqref{eq18}}{=}
   \delta_m(x).
  \end{aligned}
  \end{equation*}
\end{itemize}

Now we show that for every prime number $p$,  the pseudovariety $\mathsf{G}_{p}$ satisfies the pseudoidentities $\delta'(u_i)=\delta'(v_i)$ ($1\leq i\leq m$).
We consider the two following cases:
\begin{itemize}
  \item Let $p\in S$. Since $\delta_0$ is the solution of the system \eqref{1} modulo $\mathsf{G}_p$,  $\delta_0(x)=\delta'(x)$ ($x\in X$), and $u_i,v_i\in \Omega_X^\sigma\mathsf{S}$, the pseudovariety $\mathsf{G}_p$ satisfies the pseudoidentities $\delta'(u_i)=\delta'(v_i)$ ($1\leq i \leq m$).
  \item consider  $p_j\in \mathds{P}\setminus S$. Since $\delta_j$ is the solution of the system \eqref{1} modulo $\mathsf{G}_{p_j}$,  $\delta_j(x)=\delta'(x)$ ($x\in X$), and $u_i,v_i\in \Omega_X^\sigma\mathsf{S}$, the pseudovariety $\mathsf{G}_{p_j}$ satisfies the pseudoidentities $\delta'(u_i)=\delta'(v_i)$ ($1\leq i \leq m$).
\end{itemize}
This proves the theorem.
\end{proof}

\begin{corollary}
  The pseudovariety $\mathsf{G}_{nil}$ is $\sigma$-reducible with respect to the systems of equations associated with finite
directed graphs.
\end{corollary}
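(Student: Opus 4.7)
The plan is to derive this corollary as an immediate consequence of the two results just established. The preceding theorem gives a transfer principle: for any class $\mathcal{C}$ of systems of equations, $\mathsf{G}_{nil}$ is $\sigma$-reducible with respect to $\mathcal{C}$ if and only if every $\mathsf{G}_p$ is $\sigma$-reducible with respect to $\mathcal{C}$. Corollary \ref{cor1} supplies precisely the hypothesis we need, namely that $\mathsf{G}_p$ is $\sigma$-reducible with respect to the systems of equations associated with finite directed graphs, for every prime $p$.

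Concretely, first I would let $\mathcal{C}$ denote the class of systems of equations associated with finite directed graphs (these are the systems of the form $xy=z$ coming from the edges of a finite directed graph, equipped with rational constraints on the vertices). Then I would invoke Corollary \ref{cor1} to conclude that $\mathsf{G}_p$ is $\sigma$-reducible with respect to every system in $\mathcal{C}$, for each prime number $p$. Finally I would apply the previous theorem with this particular $\mathcal{C}$ to transfer the property from the family $\{\mathsf{G}_p\}_{p\in \mathds{P}}$ to $\mathsf{G}_{nil}$.

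There is essentially no obstacle here, because both ingredients have already been proved; the only thing to verify is that the class of systems associated with finite directed graphs fits into the framework of the transfer theorem, which is immediate since such systems are written with $\sigma$-term equations (in fact $\kappa$-term, even word, equations) and rational constraints over a finite alphabet $A$. Thus the corollary reduces to a single line citing the two preceding results.
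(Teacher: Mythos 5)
Your proposal is correct and matches the paper's own argument exactly: the paper also deduces the corollary directly from Corollary \ref{cor1} together with the preceding transfer theorem. Nothing further is needed.
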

\begin{proof}
The result follows from Corollary \ref{cor1} and the preceding theorem.
\end{proof}

\begin{corollary}
   The pseudovariety $\mathsf{G}_{nil}$ is not completely  $\sigma$-reducible.
\end{corollary}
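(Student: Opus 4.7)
The plan is to argue by contradiction using the preceding theorem. Take $\mathcal{C}$ to be the class of all finite systems of $\sigma$-equations with rational constraints in $A^{*}$ and $\sigma$-term parameter values. Complete $\sigma$-reducibility of a pseudovariety is, by definition, $\sigma$-reducibility with respect to this $\mathcal{C}$, so the preceding theorem yields that $\mathsf{G}_{nil}$ is completely $\sigma$-reducible if and only if every $\mathsf{G}_p$ is completely $\sigma$-reducible. Thus it is enough to exhibit a single prime $p$ for which $\mathsf{G}_p$ is not completely $\sigma$-reducible.

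For such a prime $p$, I would work at the level of $\overline{\Omega}_A\mathsf{G}_p$. By Theorem \ref{th1}, one has the identification $\Omega^\sigma_A \mathsf{G}_p = \Omega^\kappa_A \mathsf{G}_p$, so the $\sigma$-subalgebra of $\overline{\Omega}_A \mathsf{G}_p$ coincides with the image of the free group $FG(A)$ under $\psi_{\mathsf{G}_p}$. The strategy is then to exhibit a rational language $L\subseteq A^{*}$ together with a pseudoword $w$ such that $w$ lies in the pro-$\mathsf{G}_p$ closure $Cl_{\mathsf{G}_p}(L)$ but fails to equal, modulo $\mathsf{G}_p$, any element of $\Omega^\sigma_A\mathsf{S}\cap Cl(L)$. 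A single-variable system of the form $x=w$ with $w$ encoded as a $\sigma$-term parameter and constraint $L_x=L$ then has a solution modulo $\mathsf{G}_p$ but no $\sigma$-term solution, producing the required failure. This mirrors the strategy behind the Coulbois--Kh\'elif result cited in the introduction for $\mathsf{G}$, transferred to the pro-$p$ setting via the collapse given by Theorem \ref{th1}.

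The main obstacle is the production of such $L$ and $w$: one must ensure that the pro-$\mathsf{G}_p$ closure of a suitably chosen rational subset of $A^{*}$ contains elements that genuinely escape the free-group image, even after imposing the rational constraint, and that a single $\sigma$-equation can pin down such an element as the unique solution. This amounts to a careful adaptation of the Coulbois--Kh\'elif construction, where the non-$\kappa$-reachable element is detected in the pro-$p$ topology on the free group rather than the full profinite topology; the pro-$p$ density statements of Margolis--Sapir--Weil cited earlier, together with Theorem \ref{th1}, provide the framework for carrying out this transfer. Once the failure at some $\mathsf{G}_p$ is established, the corollary follows from the preceding theorem.
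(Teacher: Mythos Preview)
Your high-level reduction is the same as the paper's: invoke the preceding theorem to pass from $\mathsf{G}_{nil}$ to some $\mathsf{G}_p$, and then use Theorem~\ref{th1} to collapse $\Omega^\sigma_A\mathsf{G}_p$ to $\Omega^\kappa_A\mathsf{G}_p$, i.e.\ the free group. The problem is in your proposed witness. A single-variable system $x=w$ with $w$ a $\sigma$-term parameter and rational constraint $L$ can \emph{never} exhibit failure of $\sigma$-reducibility for $\mathsf{G}_p$. Indeed, since the parameter $w$ must lie in $\Omega^\sigma_A\mathsf{S}$, its image $\psi_{\mathsf{G}_p}(w)$ lies in $\Omega^\kappa_A\mathsf{G}_p$; if moreover the system has a solution modulo $\mathsf{G}_p$, then $\psi_{\mathsf{G}_p}(w)\in Cl_{\mathsf{G}_p}(L)\cap\Omega^\kappa_A\mathsf{G}_p=Cl_{\mathsf{G}_p,\kappa}(\psi_{\mathsf{G}_p}(L))$. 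But the proof of Corollary~\ref{cor1} establishes precisely the implication \eqref{eq40}: every element of $Cl_{\mathsf{G}_p,\kappa}(\psi_{\mathsf{G}_p}(L))$ lifts to some $u\in Cl(L)\cap\Omega^\sigma_A\mathsf{S}$. Hence $\delta(x)=u$ is always a $\sigma$-solution. In short, the $\sigma$-fullness of $\mathsf{G}_p$---which the paper proves and uses positively---kills any one-variable constraint-based obstruction of the kind you describe.

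The paper instead uses the Coulbois--Kh\'elif equation itself, not a closure phenomenon: over $A=\{a,b\}$ and for an odd prime $p$, the commutator equation $[x^2a,\,y^{-1}z^2by]=1$ with constraints $L_x=\{a\}^*$, $L_z=\{b\}^*$, $L_y=A^*$ has an explicit solution modulo $\mathsf{G}_p$ (namely $x=(a^{\omega-1})^{2^{\omega-1}}$, $z=(b^{\omega-1})^{2^{\omega-1}}$, $y=1$), yet by Coulbois--Kh\'elif it has no solution in the free group. Since a $\sigma$-solution would project to $\Omega^\sigma_A\mathsf{G}_p=\Omega^\kappa_A\mathsf{G}_p$ and hence give a free-group solution, no $\sigma$-solution exists. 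The obstruction therefore comes from the insolubility of a genuine multi-variable equation in the free group, not from a rational constraint that fails to be reached; your reading of what Coulbois--Kh\'elif provides should be adjusted accordingly.
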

\begin{proof}
  By the preceding Theorem, it is enough to show that for some prime number $p$, the pseudovariety $\mathbf{G}_p$ is not  completely $\sigma$-reducible. Let $p$ be a odd prime number, $A=\{a,b\}$ and consider the following equation
   \begin{equation}\label{eq30}
     [x^2a,y^{-1}z^2by]=1
   \end{equation}
   It has been shown that the equation \eqref{eq30}  does not have solution in the free group \cite{Coulbois&&Khelif}.
We consider the following constraints:
\begin{itemize}
  \item $L_x=\{a\}^*$;
  \item $L_z=\{b\}^*$;
\item $L_y=A^*$.
\end{itemize}
Let $p$ be an odd prime number. We find a solution of the equation \eqref{eq30} with the above  constraints modulo $\mathsf{G}_p$. The pseudowords  $x=(a^{\omega-1})^{2^{\omega-1}}$, $z=(b^{\omega-1})^{2^{\omega-1}}$ and $y=1$ are a solutions of this equation modulo $\mathsf{G}_p$ because  the pseudoidentities  $(a^{\omega-1})^{2^{\omega}}=a^{\omega-1}$ and $(b^{\omega-1})^{2^{\omega}}=b^{\omega-1}$ hold in $\mathsf{G}_p$.

Since $\mathsf{G}_p$ is $\sigma$-full, $\mathsf{G}_p$ is $\sigma$-reducible for the equation  $\eqref{eq30}$ if and only if it has a solution $\delta:\overline{\Omega}_X\mathsf{S}\rightarrow\overline{\Omega}_A \mathsf{S}$ such that $\psi_{\mathsf{G}_p}(\delta(u))=1$ and  $\psi_{\mathsf{G}_p}(\delta(x))\in {\Omega}_A^\sigma \mathsf{G}_p$.
But as $\Omega_A^\sigma \mathsf{G}_p=\Omega_A^\kappa \mathsf{G}_p$,  the equation \eqref{eq30} must have a solution in the free group which is contradiction.
\end{proof}

\section*{Acknowledgments}
This work is part of the author's  preparation of a doctoral thesis under the supervision of Prof. Jorge Almeida, whose advice is gratefully acknowledged.

It was partially supported by the FCT Docoral Grant with reference (SFRH/ BD/98202/2013).
 It was also partially supported by CMUP (UID
 /MAT/00144/ 2013), which is funded by FCT (Portugal) with national (MEC) and European structural funds (FEDER), under the partnership agreement PT2020.

\bibliographystyle{amsplain}

\begin{thebibliography}{10}

\bibitem{Albert&Baldinger&Rhodes:1992}
D.~Albert, R.~Baldinger, and J.~Rhodes, \emph{Undecidability of the identity
  problem for finite semigroups}, J.~Symbolic Logic \textbf{57} (1992),
  179--192.

\bibitem{Jorge:1994}
J.~Almeida, \emph{Finite semigroups and universal algebra}, Series in Algebra,
  vol.~3, World Scientific Publishing Co., Inc., River Edge, NJ, 1994,
  Translated from the 1992 Portuguese original and revised by the author.

\bibitem{Jorge:2002}
\bysame, \emph{Dynamics of implicit operations and tameness of pseudovarieties
  of groups}, Trans. Amer. Math. Soc. \textbf{354} (2002), 387--411.

\bibitem{Almeida&Azevedo&Zeitoun:1997}
J.~Almeida, A.~Azevedo, and M.~Zeitoun, \emph{Pseudovariety joins involving
  {$J$}-trivial semigroups}, Internat.~J.~Algebra Comput. \textbf{9} (1999),
  99--112.

\bibitem{Almeida&Costa&Teixeira:2010}
J.~Almeida, J.~C. Costa, and M.~L. Teixeira, \emph{Semidirect product with an
  order-computable pseudovariety and tameness}, Semigroup Forum \textbf{81}
  (2010), 26--50.

\bibitem{Almeida&Costa&Zeitoun:2004}
J.~Almeida, J.~C. Costa, and M.~Zeitoun, \emph{Tameness of pseudovariety joins
  involving {R}}, Monatsh.~Math. \textbf{146} (2005), 89--111.

\bibitem{Jorge&Zeitoun&Carlos:2007}
\bysame, \emph{Complete reducibility of systems of equations with respect to
  {$R$}}, Port.~Math.~(N.S.) \textbf{64} (2007), 445--508.

\bibitem{Almeida&Delgado:1997}
J.~Almeida and M.~Delgado, \emph{Sur certains syst\`emes d'\'equations avec
  contraintes dans un groupe libre}, Portugal.~Math. \textbf{56} (1999),
  409--417.

\bibitem{Almeida&Delgado:1999}
\bysame, \emph{Sur certains syst\`emes d'\'equations avec contraintes dans un
  groupe libre---addenda}, Portugal.~Math. \textbf{58} (2001), 379--387.

\bibitem{Jorge&&Delgado:2005}
\bysame, \emph{Tameness of the pseudovariety of abelian groups},
  Internat.~J.~Algebra Comput. \textbf{15} (2005), 327--338.

\bibitem{Jorge&Ben:2000}
J.~Almeida and B.~Steinberg, \emph{On the decidability of iterated semidirect
  products with applications to complexity}, Proc.~London~Math.~Soc.~(3)
  \textbf{80} (2000), 50--74.

\bibitem{Ash:1991}
C.~J. Ash, \emph{Inevitable graphs: a proof of the type {II} conjecture and
  some related decision procedures}, Internat.~J.~Algebra Comput. \textbf{1}
  (1991), 127--146.

\bibitem{Coulbois&&Khelif}
T.~Coulbois and A.~Kh\'{e}lif, \emph{Equations in free groups are not finitely
  approximable}, Proc.~Amer.~Math.~Soc. \textbf{127} (1999), 963--965.

\bibitem{Eilenberg:1974}
S.~Eilenberg, \emph{Automata, languages and machines}, vol.~A, Academic Press,
  New York, 1974.

\bibitem{Gitik:1999b}
R.~Gitik, \emph{On the profinite topology on negatively curved groups},
  J.~Algebra \textbf{219} (1999), 80--86.

\bibitem{Gitik&Rips:1995}
R.~Gitik and E.~Rips, \emph{On separability properties of groups},
  Internat.~J.~Algebra Comput. \textbf{5} (1995), 703--717.

\bibitem{Herwig&Lascar:1997}
B.~Herwig and D.~Lascar, \emph{Extending partial automorphisms and the
  profinite topology on free groups}, Trans. Amer. Math. Soc. \textbf{352}
  (2000), 1985--2021.

\bibitem{Margolis&Sapir&Weil:2001}
S.~Margolis, M.~Sapir, and P.~Weil, \emph{Closed subgroups in pro-{$\bold V$}
  topologies and the extension problem for inverse automata},
  Internat.~J.~Algebra Comput. \textbf{11} (2001), 405--445.

\bibitem{Rhodes:1997c}
J.~Rhodes, \emph{Undecidability, automata and pseudovarieties of finite
  semigroups}, Internat.~J.~Algebra Comput. \textbf{9} (1999), 455--473.

\bibitem{Ribes&Zalesskii:1993b}
L.~Ribes and P.~A. Zalesski{\u\i}, \emph{The pro-$p$ topology of a free group
  and algorithmic problems in semigroups}, Internat.~J.~Algebra Comput.
  \textbf{4} (1994), 359--374.

\bibitem{Steinberg:1998a}
B.~Steinberg, \emph{Inevitable graphs and profinite topologies: some solutions
  to algorithmic problems in monoid and automata theory, stemming from group
  theory}, Internat.~J.~Algebra Comput. \textbf{11} (2001), 25--71.

\end{thebibliography}
\providecommand{\bysame}{\leavevmode\hbox to3em{\hrulefill}\thinspace}
\providecommand{\MR}{\relax\ifhmode\unskip\space\fi MR }
\providecommand{\MRhref}[2]{%
  \href{http://www.ams.org/mathscinet-getitem?mr=#1}{#2}
}
\providecommand{\href}[2]{#2}

\end{document}